\theoremstyle{definition}
\newtheorem*{defn*}{\protect\definitionname}
\theoremstyle{plain}
\newtheorem{thm}{\protect\theoremname}[section]
\theoremstyle{remark}
\newtheorem{rem}[thm]{\protect\remarkname}
\theoremstyle{plain}
\newtheorem{prop}[thm]{\protect\propositionname}
\theoremstyle{plain}
\newtheorem{lem}[thm]{\protect\lemmaname}
\theoremstyle{definition}
\newtheorem{example}[thm]{\protect\examplename}
\newcommand*{\e}{\mathrm{e}}
\renewcommand*{\i}{\mathrm{i}}
\newcommand{\m}{\operatorname{m}}
\newcommand{\R}{\mathbb{R}}
\newcommand{\N}{\mathbb{N}}
\renewcommand{\C}{\mathbb{C}}
\newcommand{\dom}{\operatorname{dom}}
\newcommand{\ran}{\operatorname{ran}}
\newcommand{\spt}{\operatorname{spt}}
\renewcommand{\d}{\,\mathrm{d}}
\renewcommand{\Re}{\operatorname{Re}}
\newcommand{\IV}{\operatorname{IV}}
\newcommand{\ind}{\operatorname{ind}}
\theoremstyle{definition}
\newtheorem{hyp}{Hypotheses}
\providecommand{\definitionname}{Definition}
\providecommand{\examplename}{Example}
\providecommand{\lemmaname}{Lemma}
\providecommand{\propositionname}{Proposition}
\providecommand{\remarkname}{Remark}
\providecommand{\theoremname}{Theorem}
\begin{document}
\title{Semigroups associated with differential-algebraic equations}
\author{Sascha Trostorff\thanks{Mathematisches Seminar, CAU Kiel, Germany, email: trostorff@math.uni-kiel.de}}

\maketitle
\textbf{Abstract. }We consider differential-algebraic equations in
infinite dimensional state spaces, and study under which conditions
we can associate a $C_{0}$-semigroup with such equations. We determine
the right space of initial values and characterise the existence of
a $C_{0}$-semigroup in the case of operator pencils with polynomially
bounded resolvents. \medskip{}

\textbf{Keywords: }differential-algebraic equations, $C_{0}$-semigroups,
Wong sequence, operator pencil, polynomially bounded resolvent\textbf{\medskip{}
}

\textbf{2010 MSC: }46N20, 34A09, 35M31\textbf{\medskip{}
}

\section{Introduction}

In the case of matrices the study of differential-algebraic equations
(DAEs), i.e. equations of the form 
\begin{align*}
(Eu)'(t)+Au(t) & =f(t),\\
u(0) & =u_{0}
\end{align*}
for matrices $E,A\in\R^{n\times n}$, is a very active field in mathematics
(see e.g. \cite{Ilchmann2013,Ilchmann2015,Ilchmann2015_1,Ilchmann4_2017}
and the references therein). The main difference to classical differential
equations is that the matrix $E$ is allowed to have a nontrivial
kernel. Thus, one cannot expect to solve the equation for each right
hand side $f$ and each initial value $u_{0}$. In case of matrices
one can use normal forms (see e.g. \cite[Theorem 2.7]{Berger2012,Mehrmann2006})
to determine the `right' space of initial values, so-called \emph{consistent
initial values}. However, this approach cannot be used in case of
operators on infinite dimensional spaces. Another approach uses so-called
Wong sequences associated with matrices $E$ and $A$ (see e.g. \cite{Berger_Trenn2012})
and this turns out to be applicable also in the operator case.\\
In contrast to the finite dimensional case, the case of infinite dimensions
is not that well studied. It is the aim of this article, to generalise
some of the results in the finite dimensional case to infinite dimensions.
For simplicity, we restrict ourselves to homogeneous problems. More
precisely, we consider equations of the form 
\begin{align}
(Eu)'(t)+Au(t) & =0\quad(t>0),\label{eq:DAE0}\\
u(0) & =u_{0},\nonumber 
\end{align}
where $E\in L(X;Y)$ for some Banach spaces $X,Y$ and $A:\dom(A)\subseteq X\to Y$
is a densely defined closed linear operator. We will define the notion
of mild and classical solutions for such equations and determine the
`right' space of initial conditions for which a mild solution could
be obtained.

For doing so, we start with the definition of Wong sequences associated
with $(E,A)$ in \prettyref{sec:Wong-sequence} which turns out to
yield the right spaces for initial conditions. In \prettyref{sec:Necessary-conditions-for}
we consider the space of consistent initial values and provide some
necessary conditions for the existence of a $C_{0}$-semigroup associated
with the above problem under the assumption that the space of consistent
initial values is closed and the mild solutions are unique (\prettyref{hyp:C_0-sg}).
In \prettyref{sec:Pencils-with-polynomial} we consider operators
$(E,A)$ such that $(zE+A)$ is boundedly invertible on a right half
plane and the inverse is polynomially bounded on that half plane.
In this case it is possible to determine the space of consistent initial
values in terms of the Wong sequence and we can characterise the conditions
for the existence of a $C_{0}$-semigroup yielding the mild solutions
of \prettyref{eq:DAE0} at least in the case of Hilbert spaces. One
tool needed in the proof are the Fourier-Laplace transform and the
Theorem of Paley-Wiener, which will be recalled in \prettyref{sec:Preliminaries}.

As indicated above, the study of DAEs in infinite dimensions is not
such an active field of study as for the finite dimensional case.
We mention \cite{Showlater1979} where in Hilbert spaces the case
of selfadjoint operators $E$ is treated using positive definiteness
of the operator pencil. Similar approaches in Hilbert spaces were
used in \cite{Picard_McGhee} for more general equations. However,
in both references the initial condition was formulated as $\left(Eu\right)(0)=u_{0}.$
We also mention the book \cite{Favini1999}, where such equations
are studied with the focus on maximal regularity. Another approach
for dealing with such degenerated equations uses the framework of
set-valued (or multi-valued) operators, see \cite{Favini1993,Knuckels1994}.
Furthermore we refer to \cite{Reis2007,Reis2005}, where sequences
of projectors are used to decouple the system. Moreover, there exist
several references in the Russian literature, where the equations
are called Sobolev type equations (see e.g. \cite{Sviridyuk2003}
an the references therein). Finally, we mention the articles \cite{Thaller1996,Thaller2001},
which are closely related to the present work, but did not consider
the case of operator pencils with polynomially bounded resolvents.
In case of bounded operators $E$ and $A$, equations of the form
\prettyref{eq:DAE0} were studied by the author in \cite{Trostorff_DAE2019,Trostorff_DAE_higherindex_2018},
where the concept of Wong sequences associated with $(E,A)$ was already
used.

We assume that the reader is familiar with functional analysis and
in particular with the theory of $C_{0}$-semigroups and refer to
the monographs \cite{Pazy1983,engel2000one,Yosida}. Throughout, if
not announced differently, $X$ and $Y$ are Banach spaces.

\section{Preliminaries\label{sec:Preliminaries}}

We collect some basic knowledge on the so-called Fourier-Laplace transformation
and weak derivatives in exponentially weighted $L_{2}$-spaces, which
is needed in \prettyref{sec:Pencils-with-polynomial}. We remark that
these concepts were successfully used to study a broad class of partial
differential equations (see e.g. \cite{Picard,Picard_McGhee,Picard2014_survey}
and the references therein).
\begin{defn*}
Let $\rho\in\R$ and $H$ a Hilbert space. Define 
\[
L_{2,\rho}(\R;H)\coloneqq\{f:\R\to H\,;\,f\text{ measurable},\int_{\R}\|f(t)\|\e^{-2\rho t}\d t<\infty\}
\]
with the usual identification of functions which are equal almost
everywhere. Moreover, we define the Sobolev space 
\[
H_{\rho}^{1}(\R;H)\coloneqq\{f\in L_{2,\rho}(\R;H)\,;\,f'\in L_{2,\rho}(\R;H)\},
\]
where the derivative is meant in the distributional sense. Finally,
we define $\mathcal{L}_{\rho}$ as the unitary extension of the mapping
\[
C_{c}(\R;H)\subseteq L_{2,\rho}(\R;H)\to L_{2}(\R;H),\quad f\mapsto\left(t\mapsto\frac{1}{\sqrt{2\pi}}\int_{\R}\e^{-(\i t+\rho)s}f(s)\d s\right).
\]
We call $\mathcal{L}_{\rho}$ the \emph{Fourier-Laplace transform.
}Here, $C_{c}(\R;H)$ denotes the space of $H$-valued continuous
functions with compact support.
\end{defn*}
\begin{rem}
It is a direct consequence of Plancherels theorem, that $\mathcal{L}_{\rho}$
becomes unitary.
\end{rem}

The connection of $\mathcal{L}_{\rho}$ and the space $H_{\rho}^{1}(\R;H)$
is explained in the next proposition.
\begin{prop}[{see e.g. \cite[Proposition 1.1.4]{Trostorff_habil}}]
Let $u\in L_{2,\rho}(\R;H)$ for some $\rho\in\R.$ Then $u\in H_{\rho}^{1}(\R;H)$
if and only if $\left(t\mapsto(\i t+\rho)\left(\mathcal{L}_{\rho}u\right)(t)\right)\in L_{2}(\R;H).$
In this case we have 
\[
\left(\mathcal{L}_{\rho}u'\right)(t)=(\i t+\rho)\left(\mathcal{L}_{\rho}u\right)(t)\quad(t\in\R\text{ a.e.}).
\]
\end{prop}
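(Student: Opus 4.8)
The plan is to prove the characterisation of $H^1_\rho(\R;H)$ via the Fourier--Laplace transform by establishing the equivalence in both directions, using the fact that $\mathcal L_\rho$ is unitary and that differentiation intertwines with multiplication by $(\i t+\rho)$ on a dense core. First I would recall that $\mathcal L_\rho = \mathcal F \circ \mathrm e_{-\rho}$ up to normalisation, where $\mathrm e_{-\rho}$ denotes multiplication by $\e^{-\rho s}$ (which maps $L_{2,\rho}(\R;H)$ unitarily onto $L_2(\R;H)$) and $\mathcal F$ is the ordinary Fourier transform on $L_2(\R;H)$. Hence it suffices to understand how distributional differentiation behaves under $\mathrm e_{-\rho}$ and then invoke the classical Fourier characterisation of $H^1(\R;H)$: a function $v\in L_2(\R;H)$ lies in $H^1(\R;H)$ iff $(t\mapsto \i t\,(\mathcal F v)(t))\in L_2(\R;H)$, in which case $\mathcal F v' = \i t\,\mathcal F v$.

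**Key steps.** The main computation is the product rule $\left(\mathrm e_{-\rho} u\right)' = \mathrm e_{-\rho} u' - \rho\,\mathrm e_{-\rho} u$ in the sense of distributions, valid whenever $u\in L_{2,\rho}(\R;H)$ has a distributional derivative $u'\in L_{2,\rho}(\R;H)$; conversely, if $\mathrm e_{-\rho}u$ has an $L_2$-derivative then $u = \mathrm e_{\rho}(\mathrm e_{-\rho}u)$ has an $L_{2,\rho}$-derivative by the same rule. Thus $u\in H^1_\rho(\R;H)$ iff $\mathrm e_{-\rho}u\in H^1(\R;H)$, and in that case $\mathrm e_{-\rho}u' = (\mathrm e_{-\rho}u)' + \rho\,\mathrm e_{-\rho}u$. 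Applying $\mathcal F$ and the classical characterisation, $u\in H^1_\rho(\R;H)$ iff $\big(t\mapsto (\i t+\rho)\mathcal F(\mathrm e_{-\rho}u)(t)\big) = \big(t\mapsto (\i t+\rho)(\mathcal L_\rho u)(t)\big)\in L_2(\R;H)$, and then
\[
\mathcal L_\rho u' = \mathcal F(\mathrm e_{-\rho}u') = \mathcal F\big((\mathrm e_{-\rho}u)'\big) + \rho\,\mathcal F(\mathrm e_{-\rho}u) = \i t\,\mathcal L_\rho u + \rho\,\mathcal L_\rho u = (\i t+\rho)\mathcal L_\rho u,
\]
which is the claimed identity. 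Alternatively, one can argue directly: for $u\in C_c^\infty(\R;H)$ integration by parts in the defining integral for $\mathcal L_\rho$ gives $\mathcal L_\rho u' = (\i t+\rho)\mathcal L_\rho u$ immediately, and then one extends by density and closedness of the distributional derivative, using that multiplication by $(\i t+\rho)$ is a closed operator on $L_2(\R;H)$.

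**Main obstacle.** The only genuinely delicate point is justifying that the distributional derivative is compatible with the density argument: one must check that if $u_n\to u$ in $L_{2,\rho}$ with $u_n' \to g$ in $L_{2,\rho}$, then $g = u'$ distributionally — this is standard, since convergence in $L_{2,\rho}(\R;H)$ implies convergence in $L^1_{\mathrm{loc}}$ and hence in the sense of $H$-valued distributions, under which differentiation is continuous. The corresponding statement on the Fourier side is that multiplication by $(\i t+\rho)$ (with its natural maximal domain) is a closed operator, so that the graph of $\mathcal L_\rho u \mapsto \mathcal L_\rho u'$ is closed; combined with the core $C_c^\infty(\R;H)$ this yields the full equivalence. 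I would also remark that the ``in this case'' part of the identity holds pointwise a.e. precisely because both sides are honest elements of $L_2(\R;H)$ once the membership condition is satisfied.
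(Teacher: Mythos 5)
Your proposal is correct; note that the paper itself does not prove this proposition but only cites \cite[Proposition 1.1.4]{Trostorff_habil}, and your argument --- conjugating by the unitary weight $\e^{-\rho\cdot}$ so that $u\in H_{\rho}^{1}(\R;H)$ iff $\e^{-\rho\cdot}u\in H^{1}(\R;H)$ with $(\e^{-\rho\cdot}u)'=\e^{-\rho\cdot}u'-\rho\,\e^{-\rho\cdot}u$, and then invoking the classical Fourier characterisation of $H^{1}$ --- is exactly the standard route for this statement. The only implicit step, that $\i t\,\mathcal{L}_{\rho}u\in L_{2}$ is equivalent to $(\i t+\rho)\mathcal{L}_{\rho}u\in L_{2}$ because $\mathcal{L}_{\rho}u\in L_{2}(\R;H)$ anyway, is handled correctly, so nothing further is needed.
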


Moreover, we have the following variant of the classical Sobolev embedding
theorem.
\begin{prop}[{Sobolev-embedding theorem, see \cite[Lemma 3.1.59]{Picard_McGhee}
or \cite[Proposition 1.1.8]{Trostorff_habil}}]
\label{prop:Sobolev} Let $u\in H_{\rho}^{1}(\R;H)$ for some $\rho\in\R.$
Then, $u$ has a continuous representer with $\sup_{t\in\R}\|u(t)\|\e^{-\rho t}<\infty.$
\end{prop}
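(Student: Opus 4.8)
The plan is to move to the Fourier--Laplace side and show that $\mathcal{L}_{\rho}u\in L_{1}(\R;H)$; the inversion formula for $\mathcal{L}_{\rho}$ then produces the continuous, exponentially bounded representer directly. Since $\mathcal{L}_{\rho}$ is unitary, $u\in L_{2,\rho}(\R;H)$ gives $\mathcal{L}_{\rho}u\in L_{2}(\R;H)$, and the preceding proposition characterising $H_{\rho}^{1}(\R;H)$ via $\mathcal{L}_{\rho}$, applied to $u\in H_{\rho}^{1}(\R;H)$, yields that $g\coloneqq\left(t\mapsto(\i t+\rho)(\mathcal{L}_{\rho}u)(t)\right)$ also lies in $L_{2}(\R;H)$. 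These are the only two facts about $u$ that enter.

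To prove $\mathcal{L}_{\rho}u\in L_{1}(\R;H)$ I would split $\R=\{t\,;\,|t|\le1\}\cup\{t\,;\,|t|>1\}$. On the first set, which has finite Lebesgue measure, Cauchy--Schwarz gives $\int_{|t|\le1}\|(\mathcal{L}_{\rho}u)(t)\|\d t\le\sqrt{2}\,\|\mathcal{L}_{\rho}u\|_{L_{2}}$. On the second set one has $(\mathcal{L}_{\rho}u)(t)=\frac{1}{\i t+\rho}g(t)$, so another application of Cauchy--Schwarz yields $\int_{|t|>1}\|(\mathcal{L}_{\rho}u)(t)\|\d t\le\left(\int_{|t|>1}\frac{\d t}{t^{2}+\rho^{2}}\right)^{1/2}\|g\|_{L_{2}}$, and the remaining integral is finite since it is bounded by $\int_{|t|>1}t^{-2}\d t=2$. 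Adding the two estimates shows $\mathcal{L}_{\rho}u\in L_{1}(\R;H)$, with a norm bound in terms of $\|\mathcal{L}_{\rho}u\|_{L_{2}}$ and $\|g\|_{L_{2}}$.

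It remains to invert. Up to the multiplication operator $f\mapsto\e^{-\rho(\cdot)}f$ (unitary from $L_{2,\rho}$ onto $L_{2}$) and an elementary change of variables, $\mathcal{L}_{\rho}$ is the ordinary $H$-valued Fourier transform, for which the inversion theorem on $L_{1}\cap L_{2}$ is classical: for $h\in L_{1}(\R;H)\cap L_{2}(\R;H)$ one has $(\mathcal{L}_{\rho}^{-1}h)(t)=\frac{\e^{\rho t}}{\sqrt{2\pi}}\int_{\R}\e^{\i ts}h(s)\d s$ for almost every $t$. Taking $h=\mathcal{L}_{\rho}u$, the right-hand side is a representer of $u$; it is continuous in $t$ by dominated convergence (the majorant $\|h(\cdot)\|$ is integrable), and estimating under the integral sign gives $\|u(t)\|\e^{-\rho t}\le\frac{1}{\sqrt{2\pi}}\|\mathcal{L}_{\rho}u\|_{L_{1}}$ for every $t\in\R$, which is precisely the claimed bound.

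The point requiring the most care is the integrability claim, and in particular why the split at $|t|=1$ is genuinely needed: the tempting global factorisation $\mathcal{L}_{\rho}u=\frac{1}{\i(\cdot)+\rho}g$ breaks down near $t=0$ when $\rho=0$, because $t\mapsto(\i t)^{-1}$ is not square-integrable on any neighbourhood of the origin; isolating a bounded neighbourhood of $0$ (handled by finiteness of the measure) from its complement (handled by the decay $|\i t+\rho|^{-2}\sim t^{-2}$) is exactly what makes the argument uniform in $\rho\in\R$. As an alternative not relying on Fourier analysis, one may set $v\coloneqq\e^{-\rho(\cdot)}u$, verify $v\in H^{1}(\R;H)$ with $v'=\e^{-\rho(\cdot)}(u'-\rho u)$, use that $H^{1}$-functions on an interval are absolutely continuous so that $v(t)=v(s)+\int_{s}^{t}v'$, choose $s$ in a unit interval around $t$ with $\|v(s)\|^{2}\le\|v\|_{L_{2}}^{2}$ by the mean value property, and conclude $\|v(t)\|^{2}\le\|v\|_{L_{2}}^{2}+2\|v\|_{L_{2}}\|v'\|_{L_{2}}$ from $\|v(t)\|^{2}-\|v(s)\|^{2}=\int_{s}^{t}2\Re\langle v'(\tau),v(\tau)\rangle\d\tau$ and Cauchy--Schwarz; here the only nonroutine ingredient is absolute continuity of Hilbert-space-valued Sobolev functions in one variable.
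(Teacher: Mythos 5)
The paper does not prove this proposition at all: it is quoted as a known result from the cited references, so there is no internal proof to compare against. Your argument is correct and is essentially the standard proof of this embedding. The two inputs you isolate (unitarity of $\mathcal{L}_{\rho}$ and the characterisation $u\in H_{\rho}^{1}$ iff $(\i\cdot+\rho)\mathcal{L}_{\rho}u\in L_{2}$) are exactly what the paper's preliminaries provide; the split at $|t|=1$ correctly repairs the failure of the global factorisation when $\rho=0$ (for $\rho\neq0$ one could skip it, since $(\i\cdot+\rho)^{-1}\in L_{2}(\R)$); and the inversion step is legitimate because $\mathcal{L}_{\rho}$ is the composition of the unitary multiplication by $\e^{-\rho(\cdot)}$ with the $H$-valued Fourier transform, for which inversion on $L_{1}\cap L_{2}$ holds in the Bochner setting, giving continuity by dominated convergence and the bound $\sup_{t}\|u(t)\|\e^{-\rho t}\leq\tfrac{1}{\sqrt{2\pi}}\|\mathcal{L}_{\rho}u\|_{L_{1}}$. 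Your alternative argument via $v=\e^{-\rho(\cdot)}u\in H^{1}(\R;H)$ is also sound (only choose $s$ with $\|v(s)\|^{2}\leq\|v\|_{L_{2}}^{2}$ up to an $\varepsilon$, or on a set of positive measure, before passing to the limit); it is the more elementary route, avoiding Fourier analysis entirely, while the transform route has the advantage of fitting the machinery the paper actually uses later (Paley--Wiener, Hardy-space estimates) in \prettyref{prop:right_U}.
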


Finally, we need the Theorem of Paley-Wiener allowing to characterise
those $L_{2}$-functions supported on the positive real axis in terms
of their Fourier-Laplace transform.
\begin{thm}[{Paley-Wiener, \cite{Paley_Wiener} or \cite[19.2 Theorem]{rudin1987real}}]
\label{thm:Paley-Wiener} Let $\rho\in\R.$ We define the Hardy space
\[
\mathcal{H}_{2}(\C_{\Re>\rho};H)\coloneqq\left\{ f:\C_{\Re>\rho}\to H\,;\,f\text{ holomorphic},\,\sup_{\mu>\rho}\int_{\R}\|f(\i t+\mu)\|^{2}\d t<\infty\right\} .
\]
Let $u\in L_{2,\rho}(\R;H).$ Then $\spt u\subseteq\R_{\geq0}$ if
and only if 
\[
\left(\i t+\mu\mapsto\left(\mathcal{L}_{\mu}u\right)(t)\right)\in\mathcal{H}_{2}(\C_{\Re>\rho};H).
\]
\end{thm}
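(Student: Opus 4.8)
The plan is to reduce everything to the scalar, Hilbert-space–valued version of the classical Paley–Wiener theorem and exploit the structure already set up in the preliminaries. First I would fix $u\in L_{2,\rho}(\R;H)$ and observe that $\spt u\subseteq\R_{\geq0}$ forces $u\in L_{2,\mu}(\R;H)$ for \emph{every} $\mu\geq\rho$, since on the half-line the weight $\e^{-2\mu t}$ only decreases as $\mu$ grows; this is the first routine check. Hence $\mathcal{L}_\mu u$ is defined for all such $\mu$, and the candidate holomorphic function is $F(z)\coloneqq \frac{1}{\sqrt{2\pi}}\int_0^\infty \e^{-zs}u(s)\,\d s$ for $z\in\C_{\Re>\rho}$, which restricted to the line $\Re z=\mu$ agrees with $(\mathcal{L}_\mu u)(\cdot)$ after the substitution $z=\i t+\mu$.

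The forward implication then proceeds as follows. I would show $F$ is well defined and holomorphic on $\C_{\Re>\rho}$ by dominated convergence and Morera's theorem (or by differentiating under the integral sign), using $\|\e^{-zs}u(s)\|\leq \e^{-\rho s}\|u(s)\|\in L_1(\R_{\geq0})$ uniformly on compact subsets of the half-plane --- here one splits the integral over $s\in[0,1]$ and $s\geq 1$ and uses Cauchy--Schwarz with the weight on the tail. For the Hardy-space bound, by Plancherel/Parseval for $\mathcal{L}_\mu$ (which is unitary by the Remark) one has $\int_\R\|F(\i t+\mu)\|^2\,\d t=\|\mathcal{L}_\mu u\|_{L_2}^2=\int_0^\infty \|u(s)\|^2\e^{-2\mu s}\,\d s\leq \int_0^\infty\|u(s)\|^2\e^{-2\rho s}\,\d s<\infty$, and the supremum over $\mu>\rho$ is controlled by the same $\rho$-integral. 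This gives membership in $\mathcal{H}_2(\C_{\Re>\rho};H)$.

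For the converse, suppose $g\coloneqq(\i t+\mu\mapsto(\mathcal{L}_\mu u)(t))\in\mathcal{H}_2(\C_{\Re>\rho};H)$. I would apply the scalar Paley--Wiener theorem slicewise: for each $h\in H$, the function $z\mapsto\langle g(z),h\rangle$ lies in the scalar Hardy space, hence by the classical theorem (as in \cite[19.2 Theorem]{rudin1987real}) its inverse Fourier--Laplace transform along the line $\Re z=\rho$ is supported in $\R_{\geq0}$; but that inverse transform is exactly $t\mapsto\langle u(t),h\rangle$ by the definition of $\mathcal{L}_\rho$. Since this holds for all $h$ in a separable dense subset of $H$, we conclude $u(t)=0$ for a.e.\ $t<0$, i.e.\ $\spt u\subseteq\R_{\geq0}$. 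The step I expect to be the main obstacle is making the vector-valued reduction clean: one needs that $g$ lies in $L_2$ on \emph{some} fixed line (say $\Re z=\rho+1$) with the same transform relation, that the scalar Hardy-space membership of each $\langle g(\cdot),h\rangle$ follows from the uniform $L_2$-bound on $g$, and that holomorphy is preserved under pairing --- all standard but requiring a little care with the interplay between the weighted $L_2$-spaces for different $\mu$ and the fact that $\mathcal{L}_\mu$ is only defined by a unitary extension rather than an absolutely convergent integral on all of $L_{2,\mu}$. Alternatively, if one is content to \emph{cite} the vector-valued Paley--Wiener theorem directly (it holds verbatim for Hilbert-space–valued functions, the proof being identical to the scalar one via the Hilbert-space Plancherel theorem), the whole argument collapses to the bookkeeping in the first two paragraphs.
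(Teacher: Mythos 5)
The paper offers no proof of this statement at all: it is recorded as the classical Paley--Wiener theorem and simply cited from \cite{Paley_Wiener} and \cite[19.2 Theorem]{rudin1987real}, so there is no in-paper argument to compare against; your last remark (that one may just cite the vector-valued version) is in effect what the author does. Judged on its own, your sketch is the standard and correct route: for the forward direction, the absolutely convergent integral $F(z)=\tfrac{1}{\sqrt{2\pi}}\int_0^\infty \e^{-zs}u(s)\d s$, holomorphy via dominated convergence/Morera, and the Hardy bound from unitarity of $\mathcal{L}_\mu$ together with $\e^{-2\mu s}\leq\e^{-2\rho s}$ on $\R_{\geq0}$; for the converse, pairing with vectors $h\in H$ and invoking the scalar theorem (or, more directly, observing that Rudin's proof works verbatim for $H$-valued functions via the Hilbert-space Plancherel theorem).

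Three points to tighten if you were to write this out. First, $\e^{-\rho s}\|u(s)\|$ is only in $L_2(\R_{\geq0})$, not $L_1$; the correct locally uniform dominating function on $\Re z\geq\rho+\delta$ is $\e^{-\delta s}\e^{-\rho s}\|u(s)\|$, integrable by Cauchy--Schwarz, which is what your splitting remark amounts to. Second, you must actually identify $F(\i t+\mu)$ with $(\mathcal{L}_\mu u)(t)$, since $\mathcal{L}_\mu$ is defined only as a unitary extension; approximating $u$ by truncations and using continuity of $\mathcal{L}_\mu$ does this. Third, in the converse, $H$ is not assumed separable, so ``all $h$ in a separable dense subset'' does not immediately give a single null set; either use that a Bochner-measurable $u$ is almost everywhere separably valued and test against a countable dense subset of that separable subspace, or run the argument vector-valued from the start. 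It is also cleaner to compare transforms on an interior line $\Re z=\mu>\rho$, where the scalar Paley--Wiener representation and $\mathcal{L}_\mu\langle u(\cdot),h\rangle$ coincide and Fourier inversion yields $\langle u(s),h\rangle=0$ for a.e.\ $s<0$, rather than appealing to boundary values on $\Re z=\rho$. These are routine repairs; the overall plan is sound.
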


\section{Wong sequence\label{sec:Wong-sequence}}

Throughout, let $E\in L(X;Y)$ and $A:\dom(A)\subseteq X\to Y$ densely
defined closed linear.
\begin{defn*}
For $k\in\N$ we define the spaces $\IV_{k}\subseteq X$ recursively
by 
\begin{align*}
\IV_{0} & \coloneqq\dom(A),\\
\IV_{k+1} & \coloneqq A^{-1}[E[\IV_{k}]].
\end{align*}
This sequence of subspaces is called the \emph{Wong sequence associated
with $(E,A)$}.
\end{defn*}
\begin{rem}
We have
\[
\IV_{k+1}\subseteq\IV_{k}\quad(k\in\N).
\]
Indeed, for $k=0$ this follows from $\IV_{1}=A^{-1}[E[\IV_{0}]]\subseteq\dom(A)=\IV_{0}$
and hence, the assertion follows by induction.
\end{rem}

\begin{defn*}
We define 
\[
\rho(E,A)\coloneqq\{z\in\C\,;\,(zE+A)^{-1}\in L(Y;X)\}
\]
the \emph{resolvent set associated with $(E,A)$}.
\end{defn*}
We start with some useful facts on the Wong sequence. The following
result was already given in \cite{Trostorff_DAE_higherindex_2018}
in case of a bounded operator $A$.
\begin{lem}
\label{lem:observations_IV}Let $k\in\N.$ Then 
\[
E(zE+A)^{-1}A\subseteq A(zE+A)^{-1}E
\]
and 
\[
(zE+A)^{-1}E[\IV_{k}]\subseteq\IV_{k+1}
\]
for each $z\in\rho(E,A)$. Moreover, for $x\in\IV_{k}$ we find elements
$x_{1},\ldots,x_{k}\in X,\,x_{k+1}\in\dom(A)$ such that 
\[
(zE+A)^{-1}Ex=\frac{1}{z}x+\sum_{\ell=1}^{k}\frac{1}{z^{\ell+1}}x_{\ell}+\frac{1}{z^{k+1}}(zE+A)^{-1}Ax_{k+1}\quad(z\in\rho(E,A)\setminus\{0\}).
\]
\end{lem}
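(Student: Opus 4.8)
The whole lemma rests on one elementary resolvent identity, which I would isolate first: for $z\in\rho(E,A)$ and $y\in\dom(A)$, applying $(zE+A)^{-1}$ to $Ay=(zE+A)y-zEy$ yields
\[
(zE+A)^{-1}Ay=y-z(zE+A)^{-1}Ey.
\]
I would also record at the outset the trivial but crucial observation that, since $E\in L(X;Y)$, we have $\dom(zE+A)=\dom(A)$, so $(zE+A)^{-1}$ maps $Y$ into $\dom(A)$; in particular $(zE+A)^{-1}E$ maps all of $X$ into $\dom(A)$, so that every composition appearing in the statement is meaningful. The first inclusion is then immediate: for $x\in\dom(A)$, applying $E$ to the identity above gives $E(zE+A)^{-1}Ax=Ex-zE(zE+A)^{-1}Ex$, while for $u\coloneqq(zE+A)^{-1}Ex\in\dom(A)$ one computes $Au=(zE+A)u-zEu=Ex-zE(zE+A)^{-1}Ex$; hence $E(zE+A)^{-1}Ax=A(zE+A)^{-1}Ex$ for every $x\in\dom(A)$, and since $\dom(A)$ is exactly the domain of the operator on the left, this is precisely the claimed inclusion (in fact an equality on $\dom(A)$).

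For the inclusion $(zE+A)^{-1}E[\IV_k]\subseteq\IV_{k+1}$ I would argue by induction on $k$. For $k=0$, if $x\in\IV_0=\dom(A)$ then $y\coloneqq(zE+A)^{-1}Ex\in\dom(A)=\IV_0$, and since $Ay=(zE+A)y-zEy=Ex-zEy$ with $x,y\in\IV_0$ and $E[\IV_0]$ a linear subspace, we get $Ay\in E[\IV_0]$, i.e.\ $y\in A^{-1}[E[\IV_0]]=\IV_1$. For the step, assume the inclusion holds for $k$; given $x\in\IV_{k+1}$, the containment $\IV_{k+1}\subseteq\IV_k$ (from the Remark) together with the induction hypothesis gives $y\coloneqq(zE+A)^{-1}Ex\in\IV_{k+1}$, and then $Ay=Ex-zEy\in E[\IV_{k+1}]$, whence $y\in A^{-1}[E[\IV_{k+1}]]=\IV_{k+2}$.

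The expansion I would again prove by induction on $k$, now using the identity for $z\in\rho(E,A)\setminus\{0\}$. For $k=0$ the identity itself is the claim, with $x_1\coloneqq-x\in\dom(A)$. For the step, let $x\in\IV_{k+1}$; by definition of $\IV_{k+1}$ there is $w\in\IV_k$ with $Ax=Ew$, and I would emphasise that $w$ comes from this purely algebraic relation and can be chosen independently of $z$, which is what makes the $x_\ell$ below $z$-independent. The identity then gives $(zE+A)^{-1}Ex=\tfrac{1}{z}x-\tfrac{1}{z}(zE+A)^{-1}Ew$; substituting the expansion of $(zE+A)^{-1}Ew$ supplied by the induction hypothesis (with elements $w_1,\dots,w_k\in X$, $w_{k+1}\in\dom(A)$), multiplying it by $-\tfrac{1}{z}$ and collecting powers of $z$, one reads off $x_1=-w$, $x_{\ell+1}=-w_\ell$ for $\ell=1,\dots,k$, and $x_{k+2}=-w_{k+1}\in\dom(A)$, using $-(zE+A)^{-1}Aw_{k+1}=(zE+A)^{-1}A(-w_{k+1})$.

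There is no deep obstacle here; the points that need genuine care are (i) the bookkeeping that $(zE+A)^{-1}$ always maps into $\dom(A)$, so that $A(zE+A)^{-1}E$ and $E(zE+A)^{-1}A$ are legitimate operators, and (ii) in the expansion, ensuring that $w$ — and hence all the $x_\ell$ — can be taken independent of $z$, which forces one to extract $w$ from $Ax=Ew$ rather than from any resolvent formula. The only mildly fiddly computation is the telescoping of the powers of $z$ in the inductive step of the last part.
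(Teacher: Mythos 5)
Your proof is correct and follows essentially the same route as the paper: the same resolvent identity $(zE+A)^{-1}Ay=y-z(zE+A)^{-1}Ey$ drives the computation for the operator inclusion, and the expansion is obtained by the same induction, with $w$ extracted $z$-independently from the algebraic relation $Ax=Ew$ and the coefficients relabelled exactly as in the paper. The only (harmless) deviation is in the inductive step of $(zE+A)^{-1}E[\IV_k]\subseteq\IV_{k+1}$, where you use $\IV_{k+1}\subseteq\IV_k$ and the subspace property of $\IV_{k+1}$ via $Ay=Ex-zEy$, whereas the paper instead invokes the first claim together with $Ax=Ey$ for some $y\in\IV_k$.
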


\begin{proof}
For $x\in\dom(A)$ we compute 
\begin{align*}
E(zE+A)^{-1}Ax & =E\left(x-(zE+A)^{-1}zEx\right)\\
 & =Ex-zE(zE+A)^{-1}Ex\\
 & =Ex-Ex+A(zE+A)^{-1}Ex\\
 & =A(zE+A)^{-1}Ex.
\end{align*}
We prove the second and third claim by induction. Let $k=0$ and $x\in\IV_{0}=\dom(A).$
Then $(zE+A)^{-1}Ex\in\dom(A)$ with 
\[
A(zE+A)^{-1}Ex=E(zE+A)^{-1}Ax\in E[\dom(A)]=E[\IV_{0}]
\]
and thus, $(zE+A)^{-1}Ex\in\IV_{1}.$ Moreover 
\[
(zE+A)^{-1}Ex=\frac{1}{z}(x-(zE+A)^{-1}Ax)
\]
showing the equality with $x_{1}=-x\in\dom(A)$. Assume now that both
assertions hold for $k\in\N$ and let $x\in\IV_{k+1}$. Then $Ax=Ey$
for some $y\in\IV_{k}$ and we infer 
\[
A(zE+A)^{-1}Ex=E(zE+A)^{-1}Ey\in E[\IV_{k+1}]
\]
by induction hypothesis. Hence, $(zE+A)^{-1}Ex\in\IV_{k+2}.$ Moreover,
by assumption we find $y_{1},\ldots,y_{k}\in X$ and $y_{k+1}\in\dom(A)$
such that 
\begin{align*}
\frac{1}{z}y+\sum_{\ell=1}^{k}\frac{1}{z^{\ell+1}}y_{\ell}+\frac{1}{z^{k+1}}(zE+A)^{-1}Ay_{k+1} & =(zE+A)^{-1}Ey\\
 & =(zE+A)^{-1}Ax\\
 & =x-z(zE+A)^{-1}Ex.
\end{align*}
Thus, we obtain the desired formula with $x_{1}\coloneqq-y,x_{j}=-y_{j-1}$
for $j\in\{2,\ldots,k+2\}$.
\end{proof}
\begin{lem}
\label{lem:closure nice}Assume $\rho(E,A)\ne\emptyset.$ Then for
each $k\in\N$ we have that 
\[
A^{-1}\left[E\left[\overline{\IV_{k}}\right]\right]\subseteq\overline{\IV_{k+1}}.
\]
\end{lem}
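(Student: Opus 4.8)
The plan is to fix $k\in\N$ and some $z_{0}\in\rho(E,A)$, and to show that for any $x\in A^{-1}[E[\overline{\IV_{k}}]]$ one has $x\in\overline{\IV_{k+1}}$. The key idea is to replace the relation ``$Ax=Ey$ for some $y$'' (which is what lies behind the definition of $\IV_{k+1}$) by the bounded, everywhere-defined expression $(z_{0}E+A)^{-1}Ex$, and then to exploit continuity of this operator together with \prettyref{lem:observations_IV}.

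First I would record the elementary identity, valid for $x\in\dom(A)$,
\[
x=z_{0}(z_{0}E+A)^{-1}Ex+(z_{0}E+A)^{-1}Ax,
\]
which follows from $(z_{0}E+A)^{-1}(z_{0}E+A)x=x$. Now suppose $x\in A^{-1}[E[\overline{\IV_{k}}]]$, so $x\in\dom(A)$ and $Ax=Ey$ for some $y\in\overline{\IV_{k}}$. Pick a sequence $(y_{n})_{n}$ in $\IV_{k}$ with $y_{n}\to y$. Then $(z_{0}E+A)^{-1}Ey_{n}\to(z_{0}E+A)^{-1}Ey=(z_{0}E+A)^{-1}Ax$ by boundedness of $(z_{0}E+A)^{-1}$ and $E$. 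By \prettyref{lem:observations_IV} (the second inclusion), $(z_{0}E+A)^{-1}Ey_{n}\in\IV_{k+1}$ for every $n$, hence $(z_{0}E+A)^{-1}Ax\in\overline{\IV_{k+1}}$.

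Next I would handle the term $z_{0}(z_{0}E+A)^{-1}Ex$. Since $x\in\dom(A)=\IV_{0}$, the first part of this very lemma applied with index $0$, or directly \prettyref{lem:observations_IV}, gives $(z_{0}E+A)^{-1}Ex\in\IV_{1}$; but we need it in $\overline{\IV_{k+1}}$, which is smaller when $k\geq1$, so a little more is required. Here I would use that $Ax=Ey$ with $y\in\overline{\IV_{k}}$ once more: from $A(z_{0}E+A)^{-1}Ex=E(z_{0}E+A)^{-1}Ax=E(z_{0}E+A)^{-1}Ey$ (the operator identity from \prettyref{lem:observations_IV}) and $(z_{0}E+A)^{-1}Ey_{n}\in\IV_{k+1}$, we get $E(z_{0}E+A)^{-1}Ey_{n}\in E[\IV_{k+1}]$, so $A(z_{0}E+A)^{-1}Ex$ is the limit of elements of $E[\IV_{k+1}]$; combined with $(z_{0}E+A)^{-1}Ex=A^{-1}[\text{that element}]$ up to closure, continuity of $(z_{0}E+A)^{-1}E$ yields $(z_{0}E+A)^{-1}Ex\in\overline{A^{-1}[E[\IV_{k+1}]]}=\overline{\IV_{k+2}}\subseteq\overline{\IV_{k+1}}$. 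Adding the two pieces and using that $\overline{\IV_{k+1}}$ is a closed subspace gives $x\in\overline{\IV_{k+1}}$.

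The main obstacle I anticipate is exactly the bookkeeping in the previous paragraph: one must be careful that ``$(z_{0}E+A)^{-1}E$ applied to a convergent sequence in $\overline{\IV_{k}}$ lands in $\overline{\IV_{k+1}}$'' is used legitimately, i.e. that we are not secretly assuming the statement we want to prove, and that the closure is taken in $X$ throughout. A cleaner route, which I would try first, is to prove the single clean statement ``$(z_{0}E+A)^{-1}E[\overline{\IV_{k}}]\subseteq\overline{\IV_{k+1}}$'' by continuity from the second inclusion of \prettyref{lem:observations_IV}, and then combine it with the identity $x=z_{0}(z_{0}E+A)^{-1}Ex+(z_{0}E+A)^{-1}Ax$ together with $Ax\in E[\overline{\IV_{k}}]$ and (for the first summand) $x\in\dom(A)$ plus monotonicity $\overline{\IV_{k+1}}\subseteq\overline{\IV_{k}}$ applied appropriately — though one should double-check whether that last step needs $x$ itself to already be known in some $\overline{\IV_{j}}$, which may force an auxiliary induction on $k$.
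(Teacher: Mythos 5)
Your decomposition $x=z_{0}(z_{0}E+A)^{-1}Ex+(z_{0}E+A)^{-1}Ax$ and your treatment of the second summand (approximate $y\in\overline{\IV_{k}}$ by $y_{n}\in\IV_{k}$, use boundedness of $(z_{0}E+A)^{-1}E$ and \prettyref{lem:observations_IV} to get $(z_{0}E+A)^{-1}Ax\in\overline{\IV_{k+1}}$) are exactly the paper's ingredients. The gap is in your main argument for the first summand. From $A(z_{0}E+A)^{-1}Ex=E(z_{0}E+A)^{-1}Ey$ and $(z_{0}E+A)^{-1}Ey\in\overline{\IV_{k+1}}$ you only learn that $w\coloneqq(z_{0}E+A)^{-1}Ex$ satisfies $Aw\in E\left[\overline{\IV_{k+1}}\right]$, i.e. $w\in A^{-1}\left[E\left[\overline{\IV_{k+1}}\right]\right]$. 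Concluding from this that $w\in\overline{A^{-1}[E[\IV_{k+1}]]}=\overline{\IV_{k+2}}$ is a non sequitur: preimages under the (unbounded, closed) operator $A$ do not pass through closures, and ``continuity of $(z_{0}E+A)^{-1}E$'' cannot be invoked because the approximating sequence $E(z_{0}E+A)^{-1}Ey_{n}$ lives in $Y$, not in any $\IV_{j}\subseteq X$ to which you could apply that operator. Worse, the implication ``$Aw\in E[\overline{\IV_{k+1}}]\Rightarrow w\in\overline{\IV_{k+2}}$'' is precisely the statement of the lemma with $k$ replaced by $k+1$, so as written the step is circular (and aims one index higher than you even need).

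The repair is the one you flag, somewhat tentatively, in your last paragraph, and it is not optional: you must know that $x$ itself already lies in a closed $\IV$-space of the right level, and this forces an induction on $k$, which is exactly how the paper argues. For $k=0$ the first summand is unproblematic because $x\in\dom(A)=\IV_{0}$ gives $(z_{0}E+A)^{-1}Ex\in\IV_{1}$ directly. In the step from $k$ to $k+1$, if $x\in A^{-1}\left[E\left[\overline{\IV_{k+1}}\right]\right]$, then a fortiori $x\in A^{-1}\left[E\left[\overline{\IV_{k}}\right]\right]$, so the induction hypothesis gives $x\in\overline{\IV_{k+1}}$; now approximate $x$ by $w_{n}\in\IV_{k+1}$, so that $(z_{0}E+A)^{-1}z_{0}Ew_{n}\in\IV_{k+2}$ converges to the first summand, placing it in $\overline{\IV_{k+2}}$, while the second summand is handled as you did with $y_{n}\in\IV_{k+1}$. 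So your proposal contains the right pieces and the correct diagnosis of the difficulty, but the decisive step is either invalid (main argument) or left unexecuted (the ``auxiliary induction''), and carrying it out lands you on the paper's proof.
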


\begin{proof}
We prove the claim by induction. For $k=0,$ let $x\in\dom(A)$ such
that $Ax=Ey$ for some $y\in\overline{\IV_{0}}.$ Hence, we find a
sequence $(y_{n})_{n\in\N}$ in $\IV_{0}$ with $y_{n}\to y$ and
since \emph{$E$ }is bounded, we derive $Ey_{n}\to Ey=Ax.$ For $z\in\rho(E,A)$
set 
\[
x_{n}\coloneqq(zE+A)^{-1}zEx+(zE+A)^{-1}Ey_{n}.
\]
By \prettyref{lem:observations_IV} we have that $x_{n}\in\IV_{1}$
and 
\begin{align*}
\lim_{n\to\infty}x_{n} & =(zE+A)^{-1}zEx+(zE+A)^{-1}Ey\\
 & =(zE+A)^{-1}zEx+(zE+A)^{-1}Ax\\
 & =x,
\end{align*}
hence $x\in\overline{\IV_{1}}.$

Assume now that the assertion holds for some $k\in\N$ and let $x\in A^{-1}\left[E\left[\overline{\IV_{k+1}}\right]\right]$.
Then clearly $x\in A^{-1}\left[E\left[\overline{\IV_{k}}\right]\right]\subseteq\overline{\IV_{k+1}}$
and hence, we find a sequence $(w_{n})_{n\in\N}$ in $\IV_{k+1}$
with $w_{n}\to x.$ For $z\in\rho(A,E)$ we infer 
\[
(zE+A)^{-1}zEw_{n}\to(zE+A)^{-1}zEx
\]
and by \prettyref{lem:observations_IV} we have $(zE+A)^{-1}zEx\in\overline{\IV_{k+2}}.$
Moreover, we find a sequence $(y_{n})_{n\in\N}$ in $\IV_{k+1}$ with
$Ax=\lim_{n\to\infty}Ey_{n}.$ As above, we set 
\[
x_{n}\coloneqq(zE+A)^{-1}zEx+(zE+A)^{-1}Ey_{n}
\]
and obtain a sequence in $\overline{\IV_{k+2}}$ converging to $x$.
Hence $x\in\overline{\IV_{k+2}}.$
\end{proof}

\section{Necessary conditions for $C_{0}$-semigroups\label{sec:Necessary-conditions-for}}

In this section we focus on the differential-algebraic problem 
\begin{align}
Eu'(t)+Au(t) & =0\quad(t>0)\label{eq:DAE}\\
u(0) & =u_{0},\nonumber 
\end{align}

where again $E\in L(X;Y)$ and $A:\dom(A)\subseteq X\to Y$ is a linear
closed densely defined operator and $u_{0}\in X$. We begin with the
notion of a classical solution and a mild solution of the above problem.
\begin{defn*}
Let $u:\R_{\geq0}\to X$ be continuous.

\begin{enumerate}[(a)]

\item $u$ is called a \emph{classical solution }of \prettyref{eq:DAE},
if $u$ is continuously differentiable on $\R_{\geq0}$, $u(t)\in\dom(A)$
for each $t\geq0$ and \prettyref{eq:DAE} holds.

\item $u$ is called a \emph{mild solution} of \prettyref{eq:DAE},
if $u(0)=u_{0}$ and for all $t>0$ we have $\int_{0}^{t}u(s)\d s\in\dom(A)$
and 
\[
Eu(t)+A\int_{0}^{t}u(s)\d s=Eu_{0}.
\]

\end{enumerate}
\end{defn*}
Obviously, a classical solution of \prettyref{eq:DAE} is also a mild
solution of \prettyref{eq:DAE}. The main question is now to determine
a natural space, where one should seek for (mild) solutions. In particular,
we have to find the initial values. We define the space of such values
by
\[
U\coloneqq\left\{ u_{0}\in X\,;\,\exists u:\R_{\geq0}\to X\text{ mild solution of }\prettyref{eq:DAE}\right\} .
\]
Clearly, $U$ is a subspace of $X$.
\begin{prop}
\label{prop:U_subset_IV}Let $x\in U$ and $u_{x}$ be a mild solution
of \prettyref{eq:DAE} with initial value $x$. Then $u_{x}(t)\in\bigcap_{k\in\N}\overline{\IV_{k}}$
for each $t\geq0$. In particular, $U\subseteq\bigcap_{k\in\N}\overline{\IV_{k}}.$
\end{prop}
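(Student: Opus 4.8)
The plan is to represent a mild solution through its iterated time-integrals and to recover $u_x(t)$ from them by finite-difference quotients of a suitably chosen order; the point is that differencing of high enough order annihilates the polynomial-in-$t$ terms that carry the factor $x$, which in general does not lie in $\dom(A)$. Concretely, set $u^{[0]}\coloneqq u_x$ and $u^{[m]}(t)\coloneqq\int_0^t u^{[m-1]}(s)\d s$ for $m\geq 1$; then each $u^{[m]}$ is $m$ times continuously differentiable on $\R_{\geq 0}$ with $(u^{[m]})^{(m)}=u_x$. Using the mild-solution identity for $m=1$ and the closedness of $A$ (to interchange $A$ with $\int_0^t$) in the induction step, I would first show that $u^{[m]}(t)\in\dom(A)$ for all $t\geq 0$ and
\[
A u^{[m]}(t)=E\Bigl(\tfrac{t^{m-1}}{(m-1)!}\,x-u^{[m-1]}(t)\Bigr)\qquad(m\geq 1,\ t\geq 0).
\]
I would also use the elementary fact that each Wong space $\IV_j$ is a linear subspace of $X$.

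Now fix $k\in\N$, put $N\coloneqq k+1$, and for $h>0$, $t\geq 0$ let $\Delta_h^N f(t)\coloneqq\sum_{j=0}^{N}\binom{N}{j}(-1)^{N-j}f(t+jh)$ denote the $N$-th forward difference. Set $\xi_m\coloneqq\Delta_h^N u^{[m]}(t)$ for $0\leq m\leq N$. Since $\xi_m$ is a finite linear combination of the vectors $u^{[m]}(t+jh)\in\dom(A)$, it lies in $\dom(A)$ for $1\leq m\leq N$, and applying $A$ termwise to the displayed identity gives, for $1\leq m\leq N$,
\[
A\xi_m=E\Bigl(\tfrac{1}{(m-1)!}\bigl(\Delta_h^N[(\cdot)^{m-1}]\bigr)(t)\,x-\xi_{m-1}\Bigr)=-E\xi_{m-1},
\]
because $\Delta_h^N$ annihilates every polynomial of degree $<N$ and the monomial of degree $m-1$ occurring here satisfies $m-1<N$. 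From $\xi_1\in\dom(A)=\IV_0$ and the relation $A\xi_m=-E\xi_{m-1}$ together with $\IV_{m-1}=A^{-1}[E[\IV_{m-2}]]$, an induction on $m$ yields $\xi_m\in\IV_{m-1}$ for $1\leq m\leq N$; in particular $\xi_N\in\IV_{N-1}=\IV_k$. As $\IV_k$ is a subspace, $h^{-N}\xi_N\in\IV_k$ for every $h>0$, and letting $h\to 0^+$ the standard convergence of finite differences applied to the $C^N$-function $u^{[N]}$ gives $h^{-N}\xi_N\to(u^{[N]})^{(N)}(t)=u_x(t)$. Hence $u_x(t)\in\overline{\IV_k}$.

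Since $k\in\N$ and $t\geq 0$ were arbitrary, $u_x(t)\in\bigcap_{k\in\N}\overline{\IV_k}$ for every $t\geq 0$; in particular $x=u_x(0)\in\bigcap_{k\in\N}\overline{\IV_k}$, and since $x\in U$ was arbitrary, $U\subseteq\bigcap_{k\in\N}\overline{\IV_k}$. The routine parts are the inductive verification of the identity for $Au^{[m]}(t)$, the termwise computation of $A\xi_m$, and the two standard facts about $\Delta_h^N$ (annihilation of low-degree polynomials, and convergence of $h^{-N}\Delta_h^N$ to the $N$-th derivative on $C^N$-functions). The one genuinely delicate point — and the reason that merely integrating the equation and differentiating back does not work — is that $u^{[N]}(t)$ itself need not lie in $\IV_{N-1}$, since its defining identity involves $x\notin\dom(A)$, whereas the $N$-th difference $\Delta_h^N u^{[N]}(t)$ does, precisely because a difference of order $N=k+1$ removes the degree-$(N-1)$ term $\tfrac{t^{N-1}}{(N-1)!}x$; the bootstrap through the recursion $\IV_m=A^{-1}[E[\IV_{m-1}]]$ is then what upgrades membership from $\IV_0$ to $\IV_k$. (If one were additionally willing to assume $\rho(E,A)\neq\emptyset$, the closure results of the previous section yield a shorter proof: by induction on $k$ one gets $x-u_x(t)\in\overline{\IV_k}$, hence $u^{[1]}(t)\in A^{-1}[E[\overline{\IV_k}]]\subseteq\overline{\IV_{k+1}}$, and $u_x(t)=(u^{[1]})'(t)\in\overline{\IV_{k+1}}$.)
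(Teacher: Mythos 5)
Your proof is correct, but it follows a genuinely different route from the paper. The paper argues by induction on $k$: assuming $u_{x}(s)\in\overline{\IV_{k}}$ for all $s$, the mild-solution identity gives $A\int_{t}^{t+h}u_{x}(s)\d s=Eu_{x}(t+h)-Eu_{x}(t)\in E[\overline{\IV_{k}}]$, then \prettyref{lem:closure nice} yields $\int_{t}^{t+h}u_{x}(s)\d s\in A^{-1}\left[E\left[\overline{\IV_{k}}\right]\right]\subseteq\overline{\IV_{k+1}}$, and $u_{x}(t)$ is recovered as the limit of the averaged integrals $\frac{1}{h}\int_{t}^{t+h}u_{x}(s)\d s$; this is essentially the alternative you sketch in your closing parenthesis. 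Note that \prettyref{lem:closure nice} carries the hypothesis $\rho(E,A)\neq\emptyset$, so the paper's argument implicitly relies on a nonempty resolvent set (harmless where the proposition is later applied, under \prettyref{hyp:poly_bd}, but not among the stated assumptions of the section). Your main argument avoids this entirely: by passing to iterated primitives $u^{[m]}$ and applying an $(k+1)$-st order forward difference, you kill the polynomial terms carrying $x\notin\dom(A)$ and bootstrap through the exact recursion $\IV_{m}=A^{-1}[E[\IV_{m-1}]]$, so the differenced vectors land in $\IV_{k}$ itself, with a single closure taken only at the final limit $h^{-N}\Delta_{h}^{N}u^{[N]}(t)\to u_{x}(t)$. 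The trade-off: the paper's proof is shorter and reuses its closure lemma, while yours is more elementary and slightly more general (no resolvent assumption), at the cost of the finite-difference bookkeeping; all the auxiliary facts you invoke (interchange of the closed operator $A$ with the integral via continuity of $Au^{[m]}$, annihilation of low-degree polynomials by $\Delta_{h}^{N}$, convergence of $h^{-N}\Delta_{h}^{N}$ to the $N$-th derivative for $C^{N}$ functions, and $\IV_{j}$ being subspaces) are standard and hold in the Banach-space-valued setting.
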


\begin{proof}
Let $t\geq0.$ Obviously, we have that $u_{x}(t)\in\overline{\IV_{0}}=\overline{\dom(A)}=X.$
Assume now that we know $u_{x}(t)\in\overline{\IV_{k}}$ for all $t\geq0$.
We then have 
\[
A\int_{t}^{t+h}u_{x}(s)\d s=Eu_{x}(t+h)-Eu_{x}(t)\in E[\overline{\IV_{k}}]\quad(h>0)
\]
and thus, 
\[
\int_{t}^{t+h}u_{x}(s)\d s\in A^{-1}\left[E\left[\overline{\IV_{k}}\right]\right]\subseteq\overline{\IV_{k+1}}
\]
by \prettyref{lem:closure nice}. Hence, 
\[
u_{x}(t)=\lim_{h\to0}\int_{t}^{t+h}u_{x}(s)\d s\in\overline{\IV_{k+1}}.\tag*{\qedhere}
\]
\end{proof}
We state the following hypothesis, which we assume to be valid throughout
the whole section.

\begin{hyp}\label{hyp:C_0-sg}

The space $U$ is closed and for each $u_{0}\in U$ the mild solution
of \prettyref{eq:DAE} is unique.

\end{hyp}

As in the case of Cauchy problems, we can show that we can associate
a $C_{0}$-semigroup with \prettyref{eq:DAE}. The proof follows the
lines of \cite[Theorem 3.1.12]{ABHN_2011}.
\begin{prop}
Denote for $x\in U$ the unique mild solution of \prettyref{eq:DAE}
by $u_{x}.$ Then the mappings 
\[
T(t):U\to X,\quad x\mapsto u_{x}(t)
\]
for $t\geq0$ define a $C_{0}$-semigroup on $U$. In particular,
$\ran T(t)\subseteq U$ for each $t\geq0$.
\end{prop}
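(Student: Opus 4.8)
The plan is to verify the semigroup axioms $T(0)=I$, $T(t+s)=T(t)T(s)$, strong continuity, and the range inclusion, using uniqueness of mild solutions (\prettyref{hyp:C_0-sg}) as the crucial ingredient for the composition law. First I would note that $T(t)$ is well-defined and linear: for $x \in U$, the mild solution $u_x$ exists by definition of $U$ and is unique by the hypothesis, so $x \mapsto u_x(t)$ is a well-defined map; linearity follows since a linear combination of mild solutions with initial values $x,y$ solves \prettyref{eq:DAE} with initial value $\alpha x + \beta y$ (the defining relation $Eu(t) + A\int_0^t u = Eu_0$ is linear in $(u,u_0)$, and $\dom(A)$ is a subspace), hence by uniqueness equals $u_{\alpha x + \beta y}$. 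Also $T(0)x = u_x(0) = x$, so $T(0) = I$.

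For the \textbf{semigroup property} the key observation is a shift argument. Fix $x \in U$, $s \geq 0$, and set $v(t) \coloneqq u_x(t+s)$ for $t \geq 0$. I would check that $v$ is a mild solution of \prettyref{eq:DAE} with initial value $u_x(s)$: indeed $v$ is continuous, $v(0) = u_x(s)$, and for $t > 0$,
\[
\int_0^t v(\sigma)\d\sigma = \int_s^{t+s} u_x(\sigma)\d\sigma = \int_0^{t+s} u_x(\sigma)\d\sigma - \int_0^s u_x(\sigma)\d\sigma.
\]
Both integrals on the right lie in $\dom(A)$ (the first by the mild-solution property of $u_x$, the second because $u_x$ is a mild solution so $\int_0^s u_x \in \dom(A)$), hence so does $\int_0^t v$, and applying $A$ and using the mild-solution identity twice gives
\[
Ev(t) + A\int_0^t v(\sigma)\d\sigma = Eu_x(t+s) + A\int_0^{t+s} u_x - A\int_0^s u_x = Eu_x - Eu_x + Eu_x(s) = Eu_x(s).
\]
Thus $v$ is a mild solution with initial value $u_x(s)$. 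In particular this forces $u_x(s) \in U$, which is exactly the asserted range inclusion $\ran T(s) \subseteq U$; and then by uniqueness $v = u_{u_x(s)}$, so $u_x(t+s) = u_{u_x(s)}(t) = u_{T(s)x}(t)$, i.e. $T(t+s)x = T(t)T(s)x$.

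For \textbf{strong continuity} I would argue as in \cite[Theorem 3.1.12]{ABHN_2011}: one needs $t \mapsto T(t)x = u_x(t)$ to be continuous at $t = 0$ for each $x \in U$, which is immediate since $u_x$ is continuous on $\R_{\geq 0}$ by definition of a mild solution; combined with the semigroup property just proved, right-continuity at $0$ upgrades to strong continuity on all of $\R_{\geq 0}$. The one genuinely non-trivial point — and the place where \prettyref{hyp:C_0-sg} is indispensable — is that I have \emph{not} shown $T(t)$ is bounded as an operator on the Banach space $U$; this is where the closedness of $U$ enters. The standard route, following the reference, is to realize the family $\{T(t)\}$ via a closed-graph / uniform-boundedness argument: one shows the map $x \mapsto (t \mapsto u_x(t))$ from $U$ into $C([0,\tau];X)$ is closed (using that $A$ is closed and $E$ bounded, one can pass to limits in the mild-solution identity), hence bounded by the closed graph theorem since $U$ is closed (thus a Banach space), giving a local bound $\sup_{t \in [0,\tau]}\|T(t)x\| \leq C_\tau \|x\|$; the exponential bound $\|T(t)\| \leq M\e^{\omega t}$ then follows from the semigroup law in the usual way. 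I expect this boundedness/closed-graph step to be the main obstacle, as everything else is a direct verification from the definitions.
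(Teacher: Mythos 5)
Your proposal is correct and follows essentially the same route as the paper: the shift argument $v(\cdot)=u_x(\cdot+s)$ combined with uniqueness gives the range inclusion and the semigroup law, continuity of $u_x$ gives strong continuity, and boundedness of $T(t)$ comes from applying the closed graph theorem to the solution map $x\mapsto u_x$ (the paper works with the Fr\'echet space $C(\R_{\geq0};X)$ rather than $C([0,\tau];X)$, which is an immaterial variation), using closedness of $A$ and boundedness of $E$ to pass to the limit in the mild-solution identity exactly as you indicate.
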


\begin{proof}
Consider the mapping 
\[
\Phi:U\to C(\R_{\geq0};X),\quad x\mapsto u_{x}.
\]
We equip $C(\R_{\geq0};X)$ with the topology induced by the seminorms
\[
p_{n}(f)\coloneqq\sup_{t\in[0,n]}\|f(t)\|\quad(n\in\N)
\]
for which $C(\R_{\geq0};X)$ becomes a Fréchet space. Then $\Phi$
is linear and closed. Indeed, if $(x_{n})_{n\in\N}$ is a sequence
in $U$ such that $x_{n}\to x$ and $u_{x_{n}}\to u$ as $n\to\infty$
for some $x\in U$ and $u\in C(\R_{\geq0},X)$ we derive $\int_{0}^{t}u_{x_{n}}(s)\d s\to\int_{0}^{t}u(s)\d s$
for each $t\geq0$ since $u_{x_{n}}\to u$ uniformly on $[0,t].$
Moreover,
\[
A\int_{0}^{t}u_{x_{n}}(s)\d s=Ex_{n}-Eu_{x_{n}}(t)\to Ex-Eu(t)\quad(n\to\infty)
\]
for each $t\geq0$ and hence, $\int_{0}^{t}u(s)\d s\in\dom(A)$ with
\[
A\int_{0}^{t}u(s)\d s=Ex-Eu(t)\quad(t\geq0).
\]
Finally, since $u(0)=\lim_{n\to\infty}u_{x_{n}}(0)=x,$ we infer that
$u=u_{x}$ and hence, $\Phi$ is closed. By the closed graph theorem
(see e.g. \cite[III, Theorem 2.3]{Schaefer1971}), we derive that
$\Phi$ is continuous. In particular, for each $t\geq0$ the operator
\[
T(t)x=u_{x}(t)=\Phi(x)(t)
\]
is bounded and linear. Moreover, $T(t)x=u_{x}(t)\to x$ as $t\to0$
for each $x\in U.$ We are left to show that $\ran T(t)\subseteq U$
and that $T$ satisfies the semigroup law. For doing so, let $x\in U$
and $t\geq0.$ We define the function $u:\R_{\geq0}\to X$ by $u(s)\coloneqq u_{x}(t+s)=T(t+s)x.$
Then clearly, $u$ is continuous with $u(0)=u_{x}(t)=T(t)x$ and 
\[
\int_{0}^{s}u(r)\d r=\int_{0}^{s}u_{x}(t+r)\d r=\int_{t}^{s+t}u_{x}(r)\d r=\int_{0}^{s+t}u_{x}(r)\d r-\int_{0}^{t}u_{x}(r)\d r\in\dom(A)
\]
for each $s\geq0$ with 
\begin{align*}
A\int_{0}^{s}u(r)\d r & =A\int_{0}^{s+t}u_{x}(r)\d r-A\int_{0}^{t}u_{x}(r)\d r\\
 & =Eu_{x}(s+t)-Eu_{x}(t)\\
 & =Eu(s)-Eu_{x}(t)\quad(s\geq0).
\end{align*}
Hence, $u$ is a mild solution of \prettyref{eq:DAE} with initial
value $u_{x}(t)$ and thus, $u_{x}(t)\in U$. This proves $\ran T(t)\subseteq U$
and 
\[
T(t+s)x=u(s)=T(s)u_{x}(t)=T(s)T(t)x\quad(s,t\geq0,x\in U).\tag*{\qedhere}
\]
\end{proof}
We want to inspect the generator of $T$ a bit closer.
\begin{prop}
\label{prop:generator}Let $B$ denote the generator of the $C_{0}$-semigroup
$T$. Then we have $-EB\subseteq A.$
\end{prop}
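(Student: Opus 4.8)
The plan is to unravel the definition of the generator and of a mild solution and match them up. Let $x\in\dom(B)$. By definition of the generator of a $C_{0}$-semigroup, $Bx=\lim_{h\to0^+}\tfrac{1}{h}(T(h)x-x)$ exists in $X$, and moreover $\int_{0}^{t}T(s)x\d s\in\dom(B)$ with $B\int_{0}^{t}T(s)x\d s=T(t)x-x$ for every $t\geq 0$ (this is a standard property of $C_{0}$-semigroups; see e.g. \cite[Lemma 1.3]{Pazy1983}). The goal is to show $x\in\dom(EB)=\dom(B)$ — which is automatic since $E$ is bounded — and that $EBx=Ax$, i.e. $-EBx = Ax$ once signs are tracked; more precisely we must show $Bx\in$ nothing extra, and $A(\text{something}) = -EBx$. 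So the real content is an identity relating $E$, $B$ and $A$.

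First I would use that $u_{x}=T(\cdot)x$ is the mild solution of \prettyref{eq:DAE} with initial value $x$, so that for all $t>0$,
\[
E T(t)x + A\int_{0}^{t}T(s)x\d s = Ex.
\]
Rearranging gives $A\int_{0}^{t}T(s)x\d s = Ex - ET(t)x = -E(T(t)x-x)$. Now divide by $t$ and let $t\to0^{+}$: the right-hand side converges to $-EBx$ (using boundedness of $E$ and the definition of $B$), while $\tfrac{1}{t}\int_{0}^{t}T(s)x\d s \to x$ by strong continuity of $T$ at $0$. Since $A$ is closed, it follows that $x\in\dom(A)$ and $Ax = -EBx$. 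This is exactly the statement $-EB\subseteq A$: for every $x\in\dom(B)$ we have $x\in\dom(A)$ and $Ax = -EBx = (-EB)x$.

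I do not expect a serious obstacle here; the one point requiring a little care is the limiting argument with the closed operator $A$. We have $y_t \coloneqq \tfrac{1}{t}\int_{0}^{t}T(s)x\d s \to x$ and $Ay_t = -\tfrac{1}{t}E(T(t)x - x) \to -EBx$ as $t\to 0^{+}$, and closedness of $A$ (together with $A$ being defined on the sequential limit of a net, which is fine since we may take $t=\tfrac1n$) yields $x\in\dom(A)$ with $Ax=-EBx$. One should also note at the outset that $\dom(-EB)=\dom(B)$ precisely because $E\in L(X;Y)$ is everywhere defined and bounded, so the inclusion of domains in ``$-EB\subseteq A$'' reduces to $\dom(B)\subseteq\dom(A)$, which is what the argument establishes.
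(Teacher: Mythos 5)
Your proof is correct and uses essentially the same mechanism as the paper: apply the mild-solution identity for $u_{x}=T(\cdot)x$, form the difference quotient, and use boundedness of $E$, the generator limit, and closedness of $A$ to conclude $x\in\dom(A)$ with $Ax=-EBx$. The only difference is cosmetic: the paper runs this argument at every $t\geq0$ (using $u_{x}\in C^{1}$) to show $u_{x}$ is in fact a classical solution and then sets $t=0$, whereas you work directly at $t=0$, which suffices for the stated inclusion.
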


\begin{proof}
Let $x\in\dom(B)$. Consequently, $u_{x}\in C^{1}(\R_{\geq0};X)$
and thus, 
\[
A\frac{1}{h}\int_{t}^{t+h}u_{x}(s)\d s=-\frac{1}{h}E\left(u_{x}(t+h)-u_{x}(t)\right)\to-Eu_{x}'(t)\quad(h\to0)
\]
for each $t\geq0$. Since $\frac{1}{h}\intop_{t}^{t+h}u_{x}(s)\d s\to u_{x}(t)$
as $h\to0$ we infer that $u_{x}(t)\in\dom(A)$ for each $t\geq0$
and 
\[
Eu_{x}'(t)+Au_{x}(t)=0\quad(t\geq0),
\]

i.e. $u$ is a classical solution of \prettyref{eq:DAE}. Choosing
$t=0,$ we infer $x\in\dom(A)$ and $EBx=-Ax.$
\end{proof}

\section{Pencils with polynomially bounded resolvent\label{sec:Pencils-with-polynomial}}

Let $E\in L(X;Y)$ and $A:\dom(A)\subseteq X\to Y$ densely defined
closed and linear. Throughout this section we assume the following.

\begin{hyp} \label{hyp:poly_bd}There exist $\rho_{0}\in\R,\,C\geq0$
and $k\in\N$ such that:

\begin{enumerate}[(a)]

\item $\C_{\Re\geq\rho_{0}}\subseteq\rho(E,A)$,

\item $\forall z\in\C_{\Re\geq\rho_{0}}:\:\|(zE+A)^{-1}\|\leq C|z|^{k}.$

\end{enumerate}

\end{hyp}
\begin{defn*}
We call the minimal $k\in\N$ such that there exists $C\geq0$ with
\[
\|(zE+A)^{-1}\|\leq C|z|^{k}\quad(z\in\C_{\Re\geq\rho_{0}})
\]
the \emph{index of $(E,A)$, }denoted by $\ind(E,A).$
\end{defn*}
\begin{prop}
\label{prop:IV_terminates}Consider the Wong sequence $(\IV_{k})_{k\in\N}$
associated with $(E,A).$ Then 
\[
\overline{\IV_{k}}=\overline{\IV_{k+1}}
\]
for all $k>\ind(E,A).$
\end{prop}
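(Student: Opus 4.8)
The plan is to establish the inclusions $\overline{\IV_{k+1}}\subseteq\overline{\IV_k}$ and $\overline{\IV_k}\subseteq\overline{\IV_{k+1}}$ separately, for a fixed $k>\ind(E,A)$. The first is immediate from the Remark following the definition of the Wong sequence, which gives $\IV_{k+1}\subseteq\IV_k$ and hence $\overline{\IV_{k+1}}\subseteq\overline{\IV_k}$ upon taking closures. Thus the content lies entirely in the reverse inclusion, and since $\overline{\IV_{k+1}}$ is closed it suffices to prove $\IV_k\subseteq\overline{\IV_{k+1}}$.

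To this end I would fix $x\in\IV_k$ and apply \prettyref{lem:observations_IV}, which provides elements $x_1,\ldots,x_k\in X$ and $x_{k+1}\in\dom(A)$, independent of $z$, such that
\[
(zE+A)^{-1}Ex=\frac1z x+\sum_{\ell=1}^{k}\frac1{z^{\ell+1}}x_\ell+\frac1{z^{k+1}}(zE+A)^{-1}Ax_{k+1}\qquad(z\in\rho(E,A)\setminus\{0\}).
\]
Multiplying by $z$ and solving for $x$ gives
\[
x=z(zE+A)^{-1}Ex-\sum_{\ell=1}^{k}\frac1{z^{\ell}}x_\ell-\frac1{z^{k}}(zE+A)^{-1}Ax_{k+1}.
\]
The reason for writing it like this is that, again by \prettyref{lem:observations_IV}, $(zE+A)^{-1}Ex\in(zE+A)^{-1}E[\IV_k]\subseteq\IV_{k+1}$, and since $\IV_{k+1}$ is a linear subspace also $z(zE+A)^{-1}Ex\in\IV_{k+1}$; so $x$ is displayed as an element of $\IV_{k+1}$ plus an explicit error term.

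The final step is to let $z=r$ run to $+\infty$ along the real axis; by \prettyref{hyp:poly_bd}(a) every sufficiently large real $r$ lies in $\rho(E,A)$, so the identity is available there. Each of the finitely many terms $r^{-\ell}x_\ell$ tends to $0$, while \prettyref{hyp:poly_bd}(b) bounds the remainder by
\[
\|r^{-k}(rE+A)^{-1}Ax_{k+1}\|\le C\,r^{\ind(E,A)-k}\,\|Ax_{k+1}\|\to 0\qquad(r\to\infty),
\]
the convergence holding precisely because $k>\ind(E,A)$. Hence $x=\lim_{r\to\infty}r(rE+A)^{-1}Ex$ lies in $\overline{\IV_{k+1}}$, completing the proof. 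The argument is essentially a Neumann-type expansion, and the only thing that needs watching is the bookkeeping of powers in the last estimate: the exponent appearing in front of the remainder term in \prettyref{lem:observations_IV} must strictly exceed the index, which is exactly the standing assumption $k>\ind(E,A)$. None of the Fourier--Laplace machinery from \prettyref{sec:Preliminaries} is required here.
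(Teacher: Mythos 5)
Your proof is correct and follows essentially the same route as the paper: both rest on the expansion from \prettyref{lem:observations_IV}, observe that $z(zE+A)^{-1}Ex\in\IV_{k+1}$, and let the real parameter tend to $+\infty$, using the polynomial bound of \prettyref{hyp:poly_bd} together with $k>\ind(E,A)$ to kill the remainder term. The only cosmetic difference is that the paper runs the limit along integers $n\geq\rho_{0}$ rather than along all real $r\to\infty$.
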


\begin{proof}
Since we clearly have $\overline{\IV_{k+1}}\subseteq\overline{\IV_{k}}$
it suffices to prove $\IV_{k}\subseteq\overline{\IV_{k+1}}$ for $k>\ind(E,A).$
So, let $x\in\IV_{k}$ for some $k>\ind(E,A).$ By \prettyref{lem:observations_IV}
there exist $x_{1},\ldots,x_{k}\in X,\,x_{k+1}\in\dom(A)$ such that
\[
(zE+A)^{-1}Ex=\frac{1}{z}x+\sum_{\ell=1}^{k}\frac{1}{z^{\ell+1}}x_{\ell}+\frac{1}{z^{k+1}}(zE+A)^{-1}Ax_{k+1}.\quad(z\in\rho(E,A)).
\]
We define $x_{n}\coloneqq(nE+A)^{-1}nEx$ for $n\in\N_{\geq\rho_{0}}$.
Then $x_{n}\in\IV_{k+1}$ by \prettyref{lem:observations_IV} and
by what we have above 
\[
x_{n}=x+\sum_{\ell=1}^{k}\frac{1}{n^{\ell}}x_{\ell}+\frac{1}{n^{k}}(nE+A)^{-1}Ax_{k+1}\quad(n\in\N_{\geq\rho_{0}}).
\]
Since $k>\ind(E,A),$ we have that $\frac{1}{n^{k}}(nE+A)^{-1}\to0$
as $n\to\infty$ and hence, $x_{n}\to x$ as $n\to\infty,$ which
shows the claim.
\end{proof}
Our next goal is to determine the space $U$. For doing so, we restrict
ourselves to Hilbert spaces $X$.
\begin{prop}
\label{prop:right_U}Assume \prettyref{hyp:C_0-sg} and let $X$ be
a Hilbert space. Then $U=\overline{\IV_{\ind(E,A)+1}}.$
\end{prop}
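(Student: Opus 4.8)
The plan is to prove the two inclusions separately. For the inclusion $U\subseteq\overline{\IV_{\ind(E,A)+1}}$, I would simply invoke \prettyref{prop:U_subset_IV}, which gives $U\subseteq\bigcap_{k\in\N}\overline{\IV_k}\subseteq\overline{\IV_{\ind(E,A)+1}}$. This direction is essentially free and requires no use of the Hilbert space structure. The substantive direction is therefore $\overline{\IV_{\ind(E,A)+1}}\subseteq U$, and since $U$ is closed by \prettyref{hyp:C_0-sg}, it suffices to show $\IV_{\ind(E,A)+1}\subseteq U$, i.e. that every $x\in\IV_{\ind(E,A)+1}$ is a consistent initial value admitting a mild solution of \prettyref{eq:DAE}.

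To produce such a mild solution, I would construct it via the Fourier-Laplace transform in an exponentially weighted $L_2$-space. Fix $x\in\IV_{\ind(E,A)+1}$ and set $k\coloneqq\ind(E,A)$. The natural candidate for the solution is the function whose Fourier-Laplace transform is $z\mapsto(zE+A)^{-1}Ex$ (times a suitable normalisation), since formally $(Eu)'+Au=0$ with $u(0)=x$ transforms to $(zE+A)\hat u(z)=Ex$. The key analytic input is \prettyref{lem:observations_IV}: for $x\in\IV_{k+1}$ one has the expansion
\[
(zE+A)^{-1}Ex=\frac{1}{z}x+\sum_{\ell=1}^{k+1}\frac{1}{z^{\ell+1}}x_\ell+\frac{1}{z^{k+2}}(zE+A)^{-1}Ax_{k+2}
\]
with $x_{k+2}\in\dom(A)$. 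The first term $\frac1z x$ corresponds (after inverse transform) to the constant function $x$ on $\R_{\ge0}$; the polynomially decaying terms $\frac{1}{z^{\ell+1}}x_\ell$ correspond to polynomial-times-step functions; and the last term, thanks to $k=\ind(E,A)$, satisfies $\|\frac{1}{z^{k+2}}(zE+A)^{-1}Ax_{k+2}\|\le C|z|^{-2}\|Ax_{k+2}\|$ on $\C_{\Re\ge\rho_0}$, hence lies in the Hardy space $\mathcal H_2(\C_{\Re>\rho_0};X)$. By \prettyref{thm:Paley-Wiener} its inverse Fourier-Laplace transform is an $L_{2,\rho_0}$-function supported on $\R_{\ge0}$; combined with the (locally integrable, polynomially bounded, forward-supported) contributions of the other terms, one obtains a function $u\colon\R_{\ge0}\to X$, locally bounded, with $u(0)=x$. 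One then checks that $u$ satisfies the mild formulation: integrating the transformed identity $(zE+A)\hat u=Ex$ against $\frac1z$ and inverting corresponds precisely to $Eu(t)+A\int_0^t u(s)\d s=Ex$, using that $E$ is bounded and $A$ is closed to pass the operators through the integral and the limit. Continuity of $u$ at $0$ from the right (needed for $u(0)=u_0$ in the definition of mild solution) follows from the Sobolev embedding \prettyref{prop:Sobolev} applied to the regular part, together with the explicit form of the finitely many elementary terms.

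The main obstacle I anticipate is the bookkeeping needed to show the constructed $u$ is genuinely continuous on all of $\R_{\ge0}$ (not merely an $L_2$-class) and that it verifies the mild-solution integral identity for every $t>0$, including $\int_0^t u\in\dom(A)$; this requires carefully separating the "singular" finite sum of $z^{-\ell}$-terms from the Hardy-space remainder and handling each rigorously, and it is the only place the Hilbert-space hypothesis is essential (through Plancherel, Paley-Wiener and the Sobolev embedding). A secondary point is to make sure the resolvent identities from \prettyref{lem:observations_IV} combine correctly so that the transformed function really is $(zE+A)^{-1}Ex$ up to the constant term. Once $u$ is shown to be a mild solution, $x\in U$ follows, hence $\IV_{k+1}\subseteq U$, and by closedness of $U$ and \prettyref{prop:U_subset_IV} we conclude $U=\overline{\IV_{\ind(E,A)+1}}$.
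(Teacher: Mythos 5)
Your proposal follows essentially the same route as the paper: the easy inclusion via Proposition \ref{prop:U_subset_IV} and closedness of $U$, and the substantive inclusion $\IV_{\ind(E,A)+1}\subseteq U$ by taking the inverse Fourier--Laplace transform of $z\mapsto(zE+A)^{-1}Ex$, using the expansion from Lemma \ref{lem:observations_IV} together with the polynomial resolvent bound to land in the Hardy space, Paley--Wiener for the support, the Sobolev embedding for continuity and attainment of the initial value, and closedness of $A$ to verify the mild-solution identity. The paper handles the continuity/regularity bookkeeping by working with $u-\chi_{\R_{\geq0}}x$ and $zv(z)-x$ rather than splitting off the elementary $z^{-\ell}$ terms, but this is only a cosmetic difference.
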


\begin{proof}
By \prettyref{prop:generator} and \prettyref{prop:IV_terminates}
we have that $U\subseteq\overline{\IV_{\ind(E,A)+1}}$. We now prove
that $\IV_{\ind(E,A)+1}\subseteq U,$ which would yield the assertion.
Let $x\in\IV_{\ind(E,A)+1}$ and $\rho>\max\{0,\rho_{0}\}.$ We define
\[
v(z)\coloneqq(zE+A)^{-1}Ex\quad(z\in\C_{\Re\geq\rho})
\]
and show that $v\in\mathcal{H}_{2}(\C_{\Re\geq\rho};X).$ For doing
so, we use \prettyref{lem:observations_IV} to find $x_{1},\ldots,x_{k}\in X,x_{k+1}\in\dom(A)$,
$k\coloneqq\ind(E,A)+1$, such that 
\[
v(z)=(zE+A)^{-1}Ex=\frac{1}{z}x+\sum_{\ell=1}^{k}\frac{1}{z^{\ell+1}}x_{\ell}+\frac{1}{z^{k+1}}(zE+A)^{-1}Ax_{k+1}\quad(z\in\C_{\Re\geq\rho}).
\]
Then we have 
\[
\|v(z)\|\leq\frac{K}{|z|}\quad(z\in\C_{\Re\geq\rho})
\]
for some constant $K\geq0$ and hence, $v\in\mathcal{H}_{2}(\C_{\Re\geq\rho};X),$
since obviously $v$ is holomorphic. Setting 
\[
u\coloneqq\mathcal{L}_{\rho}^{\ast}v(\i\cdot+\rho)
\]
we thus have $u\in L_{2,\rho}(\R_{\geq0};X)$ by the Theorem of Paley-Wiener,
\prettyref{thm:Paley-Wiener}. Moreover, 
\[
zv(z)-x=\sum_{\ell=1}^{k}\frac{1}{z^{\ell}}x_{\ell}+\frac{1}{z^{k}}(zE+A)^{-1}Ax_{k+1}\quad(z\in\C_{\Re\geq\rho})
\]
 and thus, $z\mapsto zv(z)-x\in\mathcal{H}_{2}(\C_{\Re\geq\rho};X)$
which yields 
\[
(u-\chi_{\R_{\geq0}}x)'=\mathcal{L}_{\rho}^{\ast}\left(\left(\i\cdot+\rho\right)v-x\right)\in L_{2,\rho}(\R_{\geq0};X),
\]
i.e. $u-\chi_{\R_{\geq0}}x\in H_{\rho}^{1}(\R;X),$ which shows that
$u$ is continuous on $\R_{\geq0}$ by the Sobolev embedding theorem,
\prettyref{prop:Sobolev}. We now prove that $u$ is indeed a mild
solution. Since $u-\chi_{\R_{\geq0}}x$ is continuous on $\R,$ we
infer that 
\[
u(0+)-x=0,
\]
and thus $u$ attains the initial value $x$. Moreover, 
\begin{align*}
\left(\mathcal{L}_{\rho}E(u-\chi_{\R_{\geq0}}x)'\right)(t) & =(\i t+\rho)E\left(\mathcal{L}_{\rho}(u-\chi_{\R_{\geq0}}x)\right)(t)\\
 & =(\i t+\rho)Ev(\i t+\rho)-Ex\\
 & =(\i t+\rho)E((\i t+\rho)E+A)^{-1}Ex-Ex\\
 & =-A((\i t+\rho)E+A)^{-1}Ex\\
 & =-A\left(\mathcal{L}_{\rho}u\right)(t)
\end{align*}
for almost every $t\in\R.$ Hence, $u(t)\in\dom(A)$ almost everywhere
and 
\[
-Au(t)=\left(E(u-\chi_{\R_{\geq0}}x)'\right)(t)
\]
for almost every $t\in\R.$ By integrating over an interval $[0,t]$,
we derive 
\[
-\int_{0}^{t}Au(s)\d s=Eu(t)-Ex\quad(t\geq0)
\]
and hence, $u$ is a mild solution of \prettyref{eq:DAE}. Thus, $x\in U$
and so, $U=\overline{\IV_{\ind(E,A)+1}}.$
\end{proof}
For sake of readability, we introduce the following notion.
\begin{defn*}
We define the space 
\[
V\coloneqq A^{-1}\left[E\left[\overline{\IV_{\ind(E,A)+1}}\right]\right].
\]
\end{defn*}
\begin{rem}
\label{rem:space V}Note that $\IV_{\ind(E,A)+2}\subseteq V\subseteq\overline{\IV_{\ind(E,A)+2}}=\overline{\IV_{\ind(E,A)+1}}$
by \prettyref{lem:closure nice} and \prettyref{prop:IV_terminates}.
\end{rem}

\begin{lem}
\label{lem:op_C}Assume that $E:\overline{\IV_{\ind(E,A)+1}}\to Y$
is injective. Then 
\[
C\coloneqq E^{-1}A:V\subseteq\overline{\IV_{\ind(E,A)+1}}\to\overline{\IV_{\ind(E,A)+1}}
\]
is well-defined and closed.
\end{lem}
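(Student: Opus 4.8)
The plan is to verify the two assertions — well-definedness and closedness — separately, using the injectivity hypothesis on $E$ restricted to $\overline{\IV_{\ind(E,A)+1}}$ together with \prettyref{rem:space V}.

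For well-definedness, I first need to check that the composition $E^{-1}A$ actually makes sense as a map from $V$ into $\overline{\IV_{\ind(E,A)+1}}$. By definition of $V = A^{-1}[E[\overline{\IV_{\ind(E,A)+1}}]]$, for $x \in V$ we have $Ax \in E[\overline{\IV_{\ind(E,A)+1}}]$, so there exists $w \in \overline{\IV_{\ind(E,A)+1}}$ with $Ex = $ (wait — $Aw$? no) $Ew = Ax$; since $E$ is injective on $\overline{\IV_{\ind(E,A)+1}}$, this $w$ is unique, so $C x \coloneqq w = E^{-1}Ax$ is unambiguously defined. I also need $V \subseteq \overline{\IV_{\ind(E,A)+1}}$ so that $C$ is genuinely an operator on the space $\overline{\IV_{\ind(E,A)+1}}$; this is exactly the right-hand inclusion in \prettyref{rem:space V}. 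Finally I should note $V \neq \emptyset$ (indeed it is a subspace containing $\IV_{\ind(E,A)+2}$ by \prettyref{rem:space V}), so $C$ has a genuine domain; linearity of $C$ is immediate from linearity of $E$ and $A$.

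For closedness, I take a sequence $(x_n)$ in $V$ with $x_n \to x$ in $\overline{\IV_{\ind(E,A)+1}}$ and $C x_n \to y$ in $\overline{\IV_{\ind(E,A)+1}}$, and must show $x \in V$ and $Cx = y$. By definition $E(Cx_n) = Ax_n$. Since $E \in L(X;Y)$ is bounded, $E(Cx_n) \to Ey$; on the other hand $A$ is closed, and $x_n \to x$ with $Ax_n = E(Cx_n) \to Ey$, so $x \in \dom(A)$ and $Ax = Ey$. Since $y \in \overline{\IV_{\ind(E,A)+1}}$, this exhibits $Ax \in E[\overline{\IV_{\ind(E,A)+1}}]$, i.e. $x \in A^{-1}[E[\overline{\IV_{\ind(E,A)+1}}]] = V$. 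Moreover $Ex' = Ax$ has the unique solution $x' = Cx$ in $\overline{\IV_{\ind(E,A)+1}}$ by injectivity of $E$ there, and since also $Ey = Ax$ with $y \in \overline{\IV_{\ind(E,A)+1}}$, uniqueness forces $y = Cx$. Hence $C$ is closed.

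The only genuine subtlety is keeping track of which space the various elements live in: one must use the injectivity of $E$ precisely on the closed space $\overline{\IV_{\ind(E,A)+1}}$ (not on all of $X$, where $E$ typically has a large kernel), and one must invoke \prettyref{rem:space V} to know that $V$ and all the relevant limits stay inside that closed space so that the injectivity can be applied. Apart from this bookkeeping, the argument is a standard "closed operator composed with bounded operator" computation, and I do not anticipate a real obstacle.
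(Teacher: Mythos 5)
Your proposal is correct and follows essentially the same route as the paper: well-definedness from $A[V]\subseteq E\left[\overline{\IV_{\ind(E,A)+1}}\right]$ together with injectivity of $E$ on $\overline{\IV_{\ind(E,A)+1}}$, and closedness via boundedness of $E$, closedness of $A$, and that same injectivity. Your version merely spells out the uniqueness step for well-definedness and the inclusion $V\subseteq\overline{\IV_{\ind(E,A)+1}}$ from \prettyref{rem:space V}, which the paper leaves implicit.
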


\begin{proof}
Note that $A[V]\subseteq E[\overline{\IV_{\ind(E,A)+1}}]$ and thus,
$C$ is well-defined. Let $(x_{n})_{n\in\N}$ by a sequence in $V$
such that $x_{n}\to x$ and $Cx_{n}\to y$ in $\overline{\IV_{\ind(E,A)+1}}$
for some $x,y\in\overline{\IV_{\ind(E,A)+1}}.$ We then have 
\[
Ax_{n}=ECx_{n}\to Ey
\]
and hence, $x\in\dom(A)$ with $Ax=Ey\in E\left[\overline{\IV_{\ind(E,A)+1}}\right].$
This shows, $x\in V$ and $Cx=E^{-1}Ax=y$, thus $C$ is closed.
\end{proof}
\begin{prop}
\label{prop:necessary_gen}Assume \prettyref{hyp:C_0-sg} and let
$X$ be a Hilbert space. Denote by $B$ the generator of $T$. Then
$E:\overline{\IV_{\ind(E,A)+1}}\to Y$ is injective and $B=-C$, where
$C$ is the operator defined in \prettyref{lem:op_C}.
\end{prop}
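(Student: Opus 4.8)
The plan is to prove injectivity of $E$ on $\overline{\IV_{\ind(E,A)+1}}$ first, and then identify $B$ with $-C$ by a double inclusion, using the characterisation of $U$ from \prettyref{prop:right_U} together with the generator inclusion $-EB\subseteq A$ from \prettyref{prop:generator}.

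For injectivity, suppose $x\in\overline{\IV_{\ind(E,A)+1}}=U$ (using \prettyref{prop:right_U}) satisfies $Ex=0$. Let $u_x$ be the unique mild solution with initial value $x$. The constant function $0$ is a mild solution of \prettyref{eq:DAE} with initial value $0$, and I would like to compare. The point is that $Eu_x(t)+A\int_0^t u_x(s)\d s=Ex=0$ for all $t\geq 0$. I would try to show directly that $v\equiv x$ (the constant function) is also a mild solution with initial value $x$: indeed $Ev(t)+A\int_0^t v(s)\d s = Ex + A(tx)=tEx$ provided $x\in\dom(A)$, which need not hold. So instead I would exploit \prettyref{hyp:C_0-sg}: $0$ is a mild solution with initial value $0$, hence $T(t)0=0$; and I want to conclude $x=0$ from $Ex=0$. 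A cleaner route: apply the Fourier-Laplace machinery as in \prettyref{prop:right_U}. If $Ex=0$ then the function $u$ constructed there from $v(z)=(zE+A)^{-1}Ex=(zE+A)^{-1}0=0$ is identically $0$, so the mild solution with initial value $x$ produced there is $0$; but then its continuous representer has $u(0+)=0$, yet we also showed $u(0+)=x$, forcing $x=0$. This gives injectivity of $E$ on $\overline{\IV_{\ind(E,A)+1}}$, and hence $C=E^{-1}A$ is well-defined and closed by \prettyref{lem:op_C}.

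For $B=-C$, I argue two inclusions. First $-C\subseteq B$: take $x\in\dom(C)=V$, so $Ax=Ey$ with $y=Cx\in\overline{\IV_{\ind(E,A)+1}}=U$. Using \prettyref{lem:observations_IV} exactly as in \prettyref{prop:right_U}, the function $v(z)=(zE+A)^{-1}Ex$ lies in $\mathcal H_2$ and $zv(z)-x$ does too; moreover $Ax=Ey$ lets me write a second expansion showing $z(zv(z)-x)-(-y)=z^2 v(z)-zx+y$ also lies in $\mathcal H_2$, so that $u:=\mathcal L_\rho^\ast v(\i\cdot+\rho)$ satisfies $u-\chi_{\R_{\geq0}}x\in H^1_\rho$ and its derivative $u'-\chi_{\R_{\geq0}}(\text{jump})$ is itself once more differentiable, i.e. $u\in C^1$ near $0$ with $u(0)=x$ and $u'(0)=-y$. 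Combined with the computation in \prettyref{prop:right_U} showing $u$ is the (unique) mild solution with initial value $x$, this says $x\in\dom(B)$ and $Bx=u'(0)=-y=-Cx$. Conversely $B\subseteq -C$: for $x\in\dom(B)$, \prettyref{prop:generator} gives $x\in\dom(A)$ and $EBx=-Ax$; since $\ran T(t)\subseteq U$ we have $Bx=\lim_{t\to0}\tfrac1t(T(t)x-x)\in\overline U=U=\overline{\IV_{\ind(E,A)+1}}$, and $Ax=-EBx=E(-Bx)\in E[\overline{\IV_{\ind(E,A)+1}}]$, so $x\in V=\dom(C)$ with $ECx=Ax=E(-Bx)$; injectivity of $E$ on $\overline{\IV_{\ind(E,A)+1}}$ then forces $Cx=-Bx$.

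The main obstacle I anticipate is the ``$-C\subseteq B$'' direction: proving that the mild solution $u$ attached to $x\in V$ is genuinely $C^1$ at $0$ with the prescribed derivative $-Cx$. This requires squeezing one extra order of decay out of the resolvent expansion — the relation $Ax=Ey$ with $y\in\overline{\IV_{\ind(E,A)+1}}$ is precisely what upgrades the $\mathcal H_2$-membership of $zv(z)-x$ to that of $z^2v(z)-zx+y$, via an application of \prettyref{lem:observations_IV} to $y$ — and then invoking the Sobolev embedding \prettyref{prop:Sobolev} at the level of $u'$ rather than $u$. Everything else is bookkeeping: the uniqueness clause of \prettyref{hyp:C_0-sg} ensures the $u$ so constructed coincides with $u_x=T(\cdot)x$, so that reading off $u'(0)$ legitimately computes $Bx$.
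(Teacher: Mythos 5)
Your overall strategy (injectivity first, then $B=-C$) is reasonable, and your inclusion $B\subseteq-C$ is essentially sound, but both of the places where you do real work have genuine gaps. The injectivity argument does not work: the Fourier--Laplace construction of \prettyref{prop:right_U} is only available for $x\in\IV_{\ind(E,A)+1}$ itself, because the expansion of \prettyref{lem:observations_IV} (needed both for $v\in\mathcal{H}_{2}$ and, crucially, for $zv(z)-x\in\mathcal{H}_{2}$, which is what gives $u(0+)=x$) requires membership in the non-closed Wong space. If $x\in\overline{\IV_{\ind(E,A)+1}}$ with $Ex=0$ and $x\neq0$, then $v\equiv0$ and $zv(z)-x\equiv-x\notin\mathcal{H}_{2}$, so the statement ``we also showed $u(0+)=x$'' is exactly the step that is unavailable; the construction merely produces the mild solution with initial value $0$, and no contradiction with $u(0+)=x$ arises. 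On $\IV_{\ind(E,A)+1}$ itself injectivity is trivial (the paper notes this after the theorem), so your argument adds nothing there. Worse, injectivity on the closure cannot follow from the Laplace machinery alone: the Example in the paper satisfies \prettyref{hyp:poly_bd} with $\ind(E,A)=0$ and yet $E$ fails to be injective on $\overline{\IV_{1}}$, so any correct proof must use \prettyref{hyp:C_0-sg} through the semigroup $T$. The paper's proof does exactly this: from $-EB\subseteq A$ one computes $(zE+A)(z-B)^{-1}x=Ex$, hence $(z-B)^{-1}x=(zE+A)^{-1}Ex$ for $x\in U=\overline{\IV_{\ind(E,A)+1}}$ and $z\in\rho(B)\cap\rho(E,A)$, and $Ex=0$ then forces $x=0$.

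The direction $-C\subseteq B$ has a second gap of the same nature: $\dom(C)=V$ only satisfies $\IV_{\ind(E,A)+2}\subseteq V\subseteq\overline{\IV_{\ind(E,A)+1}}$ (\prettyref{rem:space V}), and $y=Cx$ is only known to lie in $\overline{\IV_{\ind(E,A)+1}}$, so the two applications of \prettyref{lem:observations_IV} you need (to $x$ to get $v\in\mathcal{H}_{2}$, and to $y$ to get $z^{2}v(z)-zx+y\in\mathcal{H}_{2}$) are not justified; at best your argument treats $x\in\IV_{\ind(E,A)+2}$, and you would still need a core argument to reach all of $\dom(C)$, which is not provided. This hard direction is also avoidable: once injectivity is known, your $B\subseteq-C$ upgrades to equality because $z+C$ is injective for $z\in\rho(B)\cap\rho(E,A)$ (indeed $(z+C)x=0$ gives $(zE+A)x=0$, so $x=0$), and an injective extension of the bijection $z-B$ must equal it. This is in effect what the paper does: it shows for $x,y\in\overline{\IV_{\ind(E,A)+1}}$ that $x\in\dom(C)$ with $(z+C)x=y$ if and only if $x=(zE+A)^{-1}Ey=(z-B)^{-1}y$, so that $(z+C)^{-1}=(z-B)^{-1}$ and hence $B=-C$, with no Laplace transform needed at all.
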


\begin{proof}
By \prettyref{prop:generator} we have $-EB\subseteq A.$ Hence, for
$x\in U=\overline{\IV_{\ind(E,A)+1}}$ (see \prettyref{prop:right_U})
and $z\in\rho(B)\cap\rho(E,A)$ we obtain 
\[
(zE+A)(z-B)^{-1}x=(zE-EB)(z-B)^{-1}x=Ex
\]
and hence, 
\[
(z-B)^{-1}x=(zE+A)^{-1}Ex.
\]
Thus, if $Ex=0$ for some $x\in\overline{\IV_{\ind(E,A)+1}}$, we
infer that $(z-B)^{-1}x=0$ and thus, $x=0.$ Hence, $E$ is injective
and thus, $C$ is well defined. Moreover, we observe that for $x,y\in\overline{\IV_{\ind(E,A)+1}}$
and $z\in\rho(E,A)\cap\rho(B)$ we have
\begin{align*}
x\in\dom(C)\wedge(z+C)x=y & \Leftrightarrow x\in\dom(A):\left(zE+A\right)x=Ey\\
 & \Leftrightarrow x=(zE+A)^{-1}Ey=(z-B)^{-1}y
\end{align*}
and thus, $z\in\rho(-C)$ with $(z+C)^{-1}=(z-B)^{-1},$ which in
turn implies $B=-C$.
\end{proof}
The converse statement also holds true, even in the case of a Banach
space $X$.
\begin{prop}
\label{prop:sufficient} Let $E:\overline{\IV_{\ind(E,A)+1}}\to Y$
be injective and $-C$ generate a $C_{0}$-semigroup on $\overline{\IV_{\ind(E,A)+1}}$,
where $C$ is the operator defined in \prettyref{lem:op_C}. Then
\prettyref{hyp:C_0-sg} holds.
\end{prop}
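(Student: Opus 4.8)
The plan is to show that the $C_{0}$-semigroup $(S(t))_{t\ge 0}$ generated by $-C$ on $W:=\overline{\IV_{\ind(E,A)+1}}$ is exactly the family of solution operators of \prettyref{eq:DAE}; both parts of \prettyref{hyp:C_0-sg} then follow. Throughout I would use, from \prettyref{lem:op_C} and the definition of $V$, that $\dom(C)=V\subseteq W$, that $V\subseteq\dom(A)$, and that $Ay=ECy$ for every $y\in V$ (this is precisely what $C=E^{-1}A$ means, using the injectivity of $E$ on $W$). Also $\rho(E,A)\ne\emptyset$ by \prettyref{hyp:poly_bd}, so \prettyref{prop:U_subset_IV} applies.

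The first step is to identify $U$. Given $x\in W$, I would set $u(t):=S(t)x$, which is continuous with $u(0)=x$; the semigroup identity $\int_{0}^{t}S(s)x\d s\in\dom(C)$ and $C\int_{0}^{t}S(s)x\d s=x-S(t)x$ (for the generator $-C$) then gives $\int_{0}^{t}u(s)\d s\in\dom(A)$ and
\[
A\int_{0}^{t}u(s)\d s=EC\int_{0}^{t}u(s)\d s=E\bigl(x-u(t)\bigr),
\]
i.e. $Eu(t)+A\int_{0}^{t}u(s)\d s=Ex$, so $u$ is a mild solution of \prettyref{eq:DAE} with initial value $x$ and hence $x\in U$. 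This shows $W\subseteq U$. Conversely, \prettyref{prop:U_subset_IV} yields $U\subseteq\bigcap_{k\in\N}\overline{\IV_{k}}$, and since $\overline{\IV_{k+1}}\subseteq\overline{\IV_{k}}$ for all $k$ while $\overline{\IV_{k}}=\overline{\IV_{\ind(E,A)+1}}$ for $k\ge\ind(E,A)+1$ by \prettyref{prop:IV_terminates}, this intersection equals $W$. Hence $U=W$, which is closed.

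It remains to prove uniqueness. I would pass to the difference $w:=u-\tilde u$ of two mild solutions with the same initial value $u_{0}\in U=W$, so that $w(0)=0$, $\int_{0}^{t}w(s)\d s\in\dom(A)$, $Ew(t)+A\int_{0}^{t}w(s)\d s=0$, and $w$ is $W$-valued by \prettyref{prop:U_subset_IV}. With $q(t):=\int_{0}^{t}w(s)\d s$ the identity $Aq(t)=-Ew(t)=E(-w(t))$ gives $q(t)\in A^{-1}[E[W]]=V=\dom(C)$ and, by injectivity of $E$ on $W$, $Cq(t)=-w(t)$. The heart of the argument is then that for fixed $t>0$ the map $[0,t]\ni s\mapsto S(t-s)q(s)$ is differentiable with
\[
\frac{\mathrm{d}}{\mathrm{d} s}\bigl(S(t-s)q(s)\bigr)=S(t-s)\bigl(q'(s)+Cq(s)\bigr)=S(t-s)\bigl(w(s)-w(s)\bigr)=0,
\]
so it is constant on $[0,t]$, and equating its values at $s=0$ and $s=t$ yields $q(t)=S(t)q(0)=0$; as $t>0$ was arbitrary, $q\equiv0$, whence $w=q'=0$ and $u=\tilde u$. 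The step needing the most care is this differentiation: one splits the difference quotient of $s\mapsto S(t-s)q(s)$ into the increment in $q$ (controlled by $q\in C^{1}$, strong continuity, and local boundedness of the semigroup) and the increment in the semigroup factor (controlled by $q(s)\in\dom(-C)$ and the commutation $(-C)S(\tau)\subseteq S(\tau)(-C)$); everything else is routine bookkeeping with the already-established properties of the Wong sequence and of $C$. One could instead quote the uniqueness half of the well-posedness characterisation of generators, e.g.\ \cite[Theorem 3.1.12]{ABHN_2011}, after noting that every mild solution satisfies $u(t)=u_{0}-C\int_{0}^{t}u(s)\d s$, but the direct argument above keeps the proof self-contained.
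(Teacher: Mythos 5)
Your proposal is correct and follows essentially the same route as the paper: the identification $U=\overline{\IV_{\ind(E,A)+1}}$ via the generator identity for $-C$ is identical, and for uniqueness you perform the same reduction (showing $\int_{0}^{t}u(s)\d s\in V=\dom(C)$ via injectivity of $E$ on $\overline{\IV_{\ind(E,A)+1}}$) to a mild solution of the Cauchy problem for $-C$. The only difference is that you spell out the classical $s\mapsto S(t-s)q(s)$ argument for uniqueness of that Cauchy problem, whereas the paper simply concludes $u_{x}=T(\cdot)x$ by citing the standard theory.
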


\begin{proof}
Denote by $T$ the semigroup generated by $-C.$ By \prettyref{prop:U_subset_IV}
and \prettyref{prop:IV_terminates} we know that $U\subseteq\overline{\IV_{\ind(E,A)+1}}.$
We first prove equality here. For doing so, we need to show that $T(\cdot)x$
is a mild solution of \prettyref{eq:DAE} for $x\in\overline{\IV_{\ind(E,A)+1}}.$
We have 
\[
T(t)x+C\int_{0}^{t}T(s)x\d s=x\quad(t\geq0).
\]
Since $EC\subseteq A$, we know that 
\[
\int_{0}^{t}T(s)x\d s\in\dom(A)\quad(t\geq0)
\]
and that 
\[
A\int_{0}^{t}T(s)x\d s=EC\int_{0}^{t}T(s)x\d s=Ex-ET(t)x
\]
and thus, $T(\cdot)x$ is a mild solution of \prettyref{eq:DAE},
which in turn implies $x\in U.$ So, we indeed have $U=\overline{\IV_{\ind(E,A)+1}}$
and hence, $U$ is closed. It remains to prove the uniqueness of mild
solutions for initial values in $U$. So, let $u_{x}$ be a mild solution
for some $x\in U.$ By \prettyref{prop:U_subset_IV} we know that
$u_{x}(t)\in\overline{\IV_{\ind(E,A)+1}}$ for each $t\geq0$. Hence,
\[
A\int_{0}^{t}u_{x}(s)\d s=Ex-Eu_{x}(t)\in E\left[\overline{\IV_{\ind(E,A)+1}}\right]\quad(t\geq0),
\]
which shows $\int_{0}^{t}u_{x}(s)\d s\in V=\dom(C).$ Hence, 
\[
C\int_{0}^{t}u_{x}(s)\d s=E^{-1}A\int_{0}^{t}u_{x}(s)\d s=x-u_{x}(t)\quad(t\geq0),
\]
i.e. $u_{x}$ is a mild solution of the Cauchy problem associated
with $-C$. Hence, $u_{x}=T(\cdot)x,$ which shows the claim.
\end{proof}
We summarise our findings of this section in the following theorem.
\begin{thm}
We consider the following two statements.

\begin{enumerate}[(a)]

\item \prettyref{hyp:C_0-sg} holds,

\item $E:\overline{\IV_{\ind(E,A)+1}}\to Y$ is injective and $-C$
generates a $C_{0}$-semigroup on $\overline{\IV_{\ind(E,A)+1}}$,
where $C$ is the operator defined in \prettyref{lem:op_C}.

\end{enumerate}

Then (b) $\Rightarrow$ (a) and if $X$ is a Hilbert space, then (b)
$\Leftrightarrow$ (a).
\end{thm}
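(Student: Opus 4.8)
The plan is to assemble the theorem directly from the two main results proved above, so that essentially no new argument is needed; the work has been front-loaded into \prettyref{prop:sufficient}, \prettyref{prop:right_U} and \prettyref{prop:necessary_gen}.

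For the implication (b)$\Rightarrow$(a) I would simply invoke \prettyref{prop:sufficient}: it states precisely that if $E\colon\overline{\IV_{\ind(E,A)+1}}\to Y$ is injective and $-C$ (the operator of \prettyref{lem:op_C}) generates a $C_{0}$-semigroup on $\overline{\IV_{\ind(E,A)+1}}$, then \prettyref{hyp:C_0-sg} holds — and that proposition was established for arbitrary Banach spaces $X$, so no Hilbert space structure is required for this direction.

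For the converse under the additional assumption that $X$ is a Hilbert space, I would assume (a). Then \prettyref{sec:Necessary-conditions-for} furnishes the $C_{0}$-semigroup $T$ on $U$ with generator $B$, and \prettyref{prop:right_U} identifies $U=\overline{\IV_{\ind(E,A)+1}}$. Feeding this into \prettyref{prop:necessary_gen} gives exactly the two assertions of (b): that $E$ is injective on $\overline{\IV_{\ind(E,A)+1}}$, and that $B=-C$; since $B$ generates $T$ on $U=\overline{\IV_{\ind(E,A)+1}}$, so does $-C$. Hence (a)$\Rightarrow$(b), and together with the first direction this yields the stated equivalence in the Hilbert space case.

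I do not expect a genuine obstacle here: the substantive content — the Paley--Wiener/Fourier--Laplace construction behind $U=\overline{\IV_{\ind(E,A)+1}}$, the stabilisation of the Wong sequence forced by the polynomial resolvent bound (\prettyref{prop:IV_terminates}), and the closed-graph argument producing $T$ — has already been carried out. The only point needing care is the bookkeeping order: one must apply \prettyref{prop:right_U} first, to know that the closed subspace appearing in (b) is literally the space $U$ on which $T$ lives, before invoking \prettyref{prop:necessary_gen} to identify the generator as $-C$.
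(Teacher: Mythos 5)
Your proposal is correct and follows exactly the intended route: the paper states this theorem as a summary of the preceding results, with (b)$\Rightarrow$(a) being \prettyref{prop:sufficient} (valid in Banach spaces) and (a)$\Rightarrow$(b) in the Hilbert space case being \prettyref{prop:right_U} combined with \prettyref{prop:necessary_gen}. Your remark about applying \prettyref{prop:right_U} first, so that $U=\overline{\IV_{\ind(E,A)+1}}$ is the space on which $T$ acts before identifying its generator as $-C$, is precisely the right bookkeeping.
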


The crucial condition for \prettyref{hyp:C_0-sg} to hold is the injectivity
of $E:\overline{\IV_{\ind(E,A)+1}}\to Y.$ It is noteworthy that $E|_{\IV_{\ind(E,A)+1}}$
is always injective. Indeed, if $Ex=0$ for some $x\in\IV_{\ind(E,A)+1},$
we can use \prettyref{lem:observations_IV} to find $x_{1},\ldots,x_{\ind(E,A)+1}\in X,x_{\ind(E,A)+2}\in\dom(A)$
such that 
\[
(zE+A)^{-1}Ex=\frac{1}{z}x+\sum_{\ell=1}^{\ind(E,A)+1}\frac{1}{z^{\ell+1}}x_{\ell}+\frac{1}{z^{\ind(E,A)+2}}(zE+A)^{-1}Ax_{\ind(E,A)+2}\quad(z\in\rho(E,A)).
\]
Thus, we have $0=z(zE+A)^{-1}Ex\to x$ as $z\to\infty$ and hence,
$x=0.$ However, it is not true in general,that the injectivity carries
over to the closure $\overline{\IV_{\ind(E,A)+1}}$ as the following
example shows.
\begin{example}
Consider the Hilbert space $L_{2}(-2,2)$ and define the operator
\[
\partial^{\#}:\dom(\partial^{\#})\subseteq L_{2}(-2,2)\to L_{2}(-2,2),\quad u\mapsto u',
\]
where 
\[
\dom(\partial^{\#})\coloneqq\{u\in H^{1}(-2,2)\,;\,u(-2)=u(2)\}.
\]
It is well-known that this operator is skew-selfadjoint. We set 
\[
E\coloneqq\chi_{[-1,1]}(\m),\quad A\coloneqq\chi_{[-2,2]\setminus[-1,1]}(\m)+\partial^{\#},
\]
where $\chi_{I}(\m)$ denotes the multiplication operator with the
function $\chi_{I}$ on $L_{2}(-2,2).$ Clearly, $E$ is linear and
bounded and $A$ is closed linear and densely defined. Moreover, for
$z\in\C_{\Re>0}$ and $u\in\dom(\partial^{\#})$ we obtain 
\begin{align*}
\Re\langle(zE+A)u,u\rangle & =\Re\langle z\chi_{[-1,1]}(\m)u+\chi_{[-2,2]\setminus[-1,1]}(\m)u,u\rangle\\
 & =\Re z\|\chi_{[-1,1]}(\m)u\|_{L_{2}(-2,2)}^{2}+\|\chi_{[-2,2]\setminus[-1,1]}(\m)u\|_{L_{2}(-2,2)}^{2}\\
 & \geq\min\{\Re z,1\}\|u\|_{L_{2}(-2,2)}^{2},
\end{align*}
where we have used the skew-selfadjointness of $\partial^{\#}$ in
the first equality. Hence, we have 
\[
\|u\|_{L_{2}(-2,2)}\leq\frac{1}{\min\{\Re z,1\}}\|(zE+A)u\|_{L_{2}(-2,2)},
\]
which proves the injectivity of $(zE+A)$ and the continuity of its
inverse. Since the same argumentation works for the adjoint $(zE+A)^{\ast}$,
it follows that $(zE+A)^{-1}\in L(L_{2}(-2,2))$ with 
\[
\|(zE+A)^{-1}\|\leq\frac{1}{\min\{\Re z,1\}}\quad(z\in\C_{\Re>0}).
\]
Hence, $(E,A)$ satisfies \prettyref{hyp:poly_bd} on $\C_{\Re\geq\rho_{0}}$
for each $\rho_{0}>0$ with $\ind(E,A)=0.$ Moreover, we have 
\[
u\in\IV_{1}=A^{-1}[E[\dom(A)]]
\]
if and only if $u\in\dom(\partial^{\#})$ and 
\[
\chi_{[-2,2]\setminus[-1,1]}(\m)u+u'=\chi_{[-1,1]}(\m)v
\]
for some $v\in\dom(\partial^{\#}).$ The latter is equivalent to $u\in\dom(\partial^{\#})\cap H^{2}(-1,1)$
and 
\[
u(t)+u'(t)=0\quad(t\notin[-1,1]\text{ a.e}).
\]
Thus, we have 
\begin{align*}
\IV_{1}= & \left\{ u\in\dom(\partial^{\#})\cap H^{2}(-1,1)\,;\:\exists c\in\R:\right.\\
 & \hfill\phantom{aaaaaaaaaaa}\left.\,u(t)=c\left(\chi_{[-2,-1]}(t)\e^{-t}+\chi_{[1,2]}(t)\e^{4-t}\right)\quad(t\notin[-1,1]\text{ a.e.})\right\} .
\end{align*}
In particular, we obtain that 
\[
v(t)\coloneqq\chi_{[-2,-1]}(t)\e^{-t}+\chi_{[1,2]}(t)\e^{4-t}\quad(t\in(-2,2))
\]
belongs to $\overline{\IV_{1}}$. But this function satisfies $Ev=0$
and hence, $E$ is not injective on $\overline{\IV_{1}}.$
\end{example}

\begin{rem}
\label{rem:k=00003D0_nice}In the case $E,A\in L(X;Y)$ and $\ind(E,A)=0,$
the injectivity of $E$ carries over to $\overline{\IV_{1}}.$ Indeed,
we observe that the operators 
\[
(nE+A)^{-1}nE=1-(nE+A)^{-1}
\]
for $n\in\N$ large enough are uniformly bounded. Moreover, for $x\in\IV_{1}$
we have 
\[
(nE+A)^{-1}nEx\to x\quad(n\to\infty)
\]
and hence, the latter converges carries over to $x\in\overline{\IV_{1}}.$
In particular, if $Ex=0$ for some $x\in\overline{\IV_{1}},$ we infer
$x=0$ and thus, \emph{$E$ }is indeed injective on $\overline{\IV_{1}}.$
So far, the author is not able to prove or disprove that the injectivity
also holds for $\ind(E,A)>0$ if $E$ and $A$ are bounded.
\end{rem}

\section*{Acknowledgement}

We thank Felix Schwenninger for pointing our attention to the concept
of Wong sequences in matrix calculus. Moreover, we thank Florian Pannasch
for the observation in \prettyref{rem:k=00003D0_nice} and the anonymous
referee for drawing our attention to the subject of Sobolev type equations
in the Russian literature.

\end{document}